\begin{document}

\newtheorem{Theorem}{Theorem}[section]
\newtheorem{Proposition}[Theorem]{Proposition}
\newtheorem{Corollary}[Theorem]{Corollary}
\newtheorem{Lemma}[Theorem]{Lemma}
\newtheorem{Condition}[Theorem]{Condition}
\newtheorem{Integrator}[Theorem]{Integrator}

\theoremstyle{definition}
\newtheorem{Definition}{Definition}[section]
\newtheorem{Remark}{Remark}[section]
\newtheorem{Example}{Example}[section]

\newcommand{\todo}[1]{\vspace{5 mm}\par \noindent
\marginpar{\textsc{ToDo}}
\framebox{\begin{minipage}[c]{0.95 \textwidth}
\tt #1 \end{minipage}}\vspace{5 mm}\par}

\title{Explicit symplectic approximation of nonseparable Hamiltonians: algorithm and long time performance}

\author{Molei Tao}
\date{}

\maketitle

\begin{abstract}
Explicit symplectic integrators have been important tools for accurate and efficient approximations of mechanical systems with separable Hamiltonians. For the first time, the article proposes for arbitrary Hamiltonians similar integrators, which are explicit, of any even order, symplectic in an extended phase space, and with pleasant long time properties. They are based on a mechanical restraint that binds two copies of phase space together. Using backward error analysis, KAM theory, and additional multiscale analysis, an error bound of $\mathcal{O}(T\delta^l \omega)$ is established for integrable systems, where $T$, $\delta$, $l$ and $\omega$ are respectively the (long) simulation time, step size, integrator order, and some binding constant. For non-integrable systems with positive Lyapunov exponents, such an error bound is generally impossible, but satisfactory statistical behaviors were observed in a numerical experiment with a nonlinear Schr\"{o}dinger equation.
\end{abstract}

\section{Introduction and the algorithm}
Symplectic integrators preserve the phase space volume $dq \wedge dp$ like the exact Hamiltonian flow, and thus are the preferred approach for long-time simulations of mechanical systems \cite{sanz1992symplectic,MaWe:01,Hairer06,calvo1995accurate, quispel1998volume}.
Explicit symplectic integration has been extensively studied for separable Hamiltonian (i.e. $H(q,p)=K(p)+V(q)$; see \cite{ruth1983canonical, feng1986difference, yoshida1990construction, wisdom1991symplectic, sanz1992symplectic, MaWe:01, mclachlan2002splitting, leimkuhler2004simulating, Hairer06, quispel1998volume}), but much less so for nonseparable systems. However, nonseparable Hamiltonians model important problems, such as a finite-dimensional representation of nonlinear Schr\"{o}dinger equation \cite{colliander2010transfer},
nearly integrable systems in action-angle coordinates (see \cite{henrard1983second,murray1997diffusive,li2014eccentricity,li2014chaos} for astrophysical examples), charged particle dynamics \cite{forest2006geometric,qin2008variational,Ta16}, mechanical systems in a rotating frame \cite{koon2000heteroclinic}, molecular dynamics with thermostats \cite{bond1999nose,sturgeon2000symplectic}, time regularized mechanical systems \cite{worthington2012study}, classical systems with post-Newtonian correction that approximates general relativity effects \cite{landau2013classical, buonanno2006transition}, rigid body dynamics \cite{kol1997symplectic}, pendula dynamics \cite{tao2016variational}, and the scattering of electromagnetic waves by attenuating materials \cite{huang2006symplectic}.

A common misunderstanding is that a symplectic integration has to be implicit (and hence computationally expensive) when the Hamiltonian is nonseparable. In fact, explicit symplectic integrations have been made possible for several subclasses of nonseparable Hamiltonians \cite{calvo1994numerical,sturgeon2000symplectic, blanes2002symplectic,mclachlan2004explicit, chin2009explicit, wu2003explicit,forest2006geometric,Ta16}. Notably, a first step toward explicit approximation of arbitrary $H$ was recently made in \cite{pihajoki2015explicit} by considering a larger system in an extended phase space; although the method integrator proposed there is only accurate for a short time, the idea of extended phase space is a building block of this research.

We approximate the flow of an arbitrary nonseparable $H(Q,P)$. For the first time, a generic, high-order, explicit and symplectic integrator is proposed with a provable pleasant long time performance. This is achieved by considering an augmented Hamiltonian
\[
	\bar{H}(q,p,x,y):=H_A+H_B+\omega H_C
\]
in an extended phase space with symplectic 2-form $dq \wedge dp+dx \wedge dy$, where $H_A:=H(q,y)$ and $H_B:=H(x,p)$ correspond to two copies of the original system with mixed-up positions and momenta, $H_C:=\|q-x\|_2^2/2+\|p-y\|_2^2/2$ is an artificial restraint, and $\omega$ is a constant that controls the binding of the two copies.

First of all, note the initial value problems
\[	\begin{cases}
		\dot{Q}=\partial_P H(Q,P),	& Q(0)=Q_0 \\
		\dot{P}=-\partial_Q H(Q,P),	& P(0)=P_0
	\end{cases} \quad \text{and} \quad
	\begin{cases}
		\dot{q}=\partial_p \bar{H}(q,p,x,y),	& q(0)=Q_0 \\
		\dot{p}=-\partial_q \bar{H}(q,p,x,y),	& p(0)=P_0 \\
		\dot{x}=\partial_y \bar{H}(q,p,x,y),	& x(0)=Q_0 \\
		\dot{y}=-\partial_x \bar{H}(q,p,x,y),	& y(0)=P_0
	\end{cases}
\]
have the same exact solution in the sense that $q(t)=x(t)=Q(t), p(t)=y(t)=P(t)$. This is because the form of $\bar{H}$ turns the second system into
\[	\begin{cases}
	\dot{q} &= \partial_p H(x,p)+\omega(p-y) \\
	\dot{p} &= -\partial_q H(q,y)-\omega(q-x) \\
	\dot{x} &= \partial_y H(q,y)+\omega(y-p) \\
	\dot{y} &= -\partial_x H(x,p)-\omega(x-q)
\end{cases}, \]
which upon the substitution $q(t)=x(t)=Q(t),p(t)=y(t)=P(t)$ becomes the first system. Uniqueness of ODE solution rules out the possibility of disagreed other solutions.

Second, it is possible to construct high-order symplectic integrators for $\bar{H}$ with explicit updates.
Denote respectively by $\phi_{H_A}^\delta, \phi_{H_B}^\delta, \phi_{\omega H_C}^\delta$ the time-$\delta$ flow of $H_A$, $H_B$, $\omega H_C$. Exact expressions of these flows can be explicitly obtained as:
\begin{align}
	&\phi_{H_A}^\delta: \begin{bmatrix} q \\ p \\ x \\ y \end{bmatrix} 
						\mapsto
						\begin{bmatrix}	q \\ p-\delta \partial_q H(q,y) \\ x+\delta \partial_y H(q,y) \\ y \end{bmatrix},	\quad
	 \phi_{H_B}^\delta: \begin{bmatrix} q \\ p \\ x \\ y \end{bmatrix}
						\mapsto
						\begin{bmatrix} q+\delta \partial_p H(x,p) \\ p \\ x \\ y-\delta \partial_x H(x,p) \end{bmatrix},	\quad	\nonumber\\
	&\phi_{\omega H_C}^\delta:	\begin{bmatrix} q \\ p \\ x \\ y \end{bmatrix}
						\mapsto
						\frac{1}{2} \begin{bmatrix} \begin{pmatrix} q+x \\ p+y \end{pmatrix} + R(\delta) \begin{pmatrix} q-x \\ p-y \end{pmatrix} \\ \begin{pmatrix} q+x \\ p+y \end{pmatrix} - R(\delta) \begin{pmatrix} q-x \\ p-y \end{pmatrix} \end{bmatrix}, \quad
	\text{where } R(\delta):=\begin{bmatrix} \cos(2 \omega \delta)I & \sin(2 \omega \delta)I \\ -\sin(2 \omega \delta)I & \cos(2 \omega \delta)I \end{bmatrix}.
	\label{eqn_flowMaps}
\end{align}
Then we construct a numerical integrator that approximates $\bar{H}$ by composing these maps: it is well known that
\begin{equation}
	\phi_2^\delta := \phi_{H_A}^{\delta/2} \circ \phi_{H_B}^{\delta/2} \circ \phi_{\omega H_C}^\delta \circ \phi_{H_B}^{\delta/2} \circ \phi_{H_A}^{\delta/2},
	\label{eqn_2ndOrderIntegrator}
\end{equation}
commonly named as Strang splitting, has a 3rd-order local error (thus a 2nd-order method), and is a symmetric method. Arbitrary high (even) order integrator can also be obtained. The $l$-th order version will have an update map $\phi_l^\delta$ given by, for instance, `the triple jump' \cite{forest1989canonical,suzuki1990fractal, yoshida1990construction, mclachlan2002splitting}:
\begin{equation}
	\phi_l^\delta := \phi_{l-2}^{\gamma_l \delta} \circ \phi_{l-2}^{(1-2\gamma_l) \delta} \circ \phi_{l-2}^{\gamma_l \delta}, \quad \text{where }\gamma_l=\frac{1}{2-2^{1/(l+1)}}.
	\label{eqn_highOrderIntegrator}
\end{equation}
Each update $\phi_l^\delta$ constitutes one of our proposed integrators, which are symplectic because each flow is symplectic. Each one produces a discrete trajectory
\[
	\begin{bmatrix} q_N \\ p_N \\ x_N \\ y_N \end{bmatrix} := \left(\phi_l^\delta\right)^N \begin{bmatrix} Q(0) \\ P(0) \\ Q(0) \\ P(0) \end{bmatrix},
\]
where $q_N, p_N$ (and $x_N, y_N$ too) approximate the exact solution $Q(N\delta), P(N\delta)$. %From now on, we will use $Q,P$ to denote the exact trajectory of $H$, and $q,p,x,y$ the numerical integration of $\bar{H}$.

This approximation needs justification, because although the exact solutions of $H$ and $\bar{H}$ agree, truncation errors in the numerical solution of $\bar{H}$ may lead to large global error after $\mathcal{O}(1)$ time. %, which would mean a significant deviation from the desired $H$ solution.
We'll see this won't be the case if the system is integrable. To discuss the idea, note \cite{pihajoki2015explicit} %proposed to lift the symplectic flow of $H(Q,P)$ to the symplectic flow in an extended $(q,p,x,y)$ space, for the purpose of achieving explicit integration. The approach adopted there was to 
considered the Hamiltonian $H_A+H_B$ without the binding, i.e. $\bar{H}$ with $\omega=0$. The resulting integrator did produce $(q,p)$ and $(x,y)$ that well approximate $Q,P$ till $\mathcal{O}(1)$ time, but then they quickly diverge. A fix was suggested in \cite{pihajoki2015explicit} based on an extra phase space mixing substep for inducing a coupling between $(q,p)$ and $(x,y)$; unfortunately, symplecticity is lost due to this substep. We replace this substep using $\omega H_C$. This is because, under reasonable assumptions, the near conservation of $\bar{H}$ by its symplectic integration (established by backward error analysis; see \cite{Hairer06}) will imply the boundedness of $\omega H_C$, and thus that $q-x,p-y$ are at most $\mathcal{O}(1/\sqrt{\omega})$, which prevents the undesired divergence.

One may worry whether large $\omega$ requires small $\delta$, which would undermine the computational efficiency gained by an explicit integrator. Also, will a finite $\omega$ introduce another source of error besides truncation error? In addition, one is interested in whether $q-Q, p-P$ are small, but not $q-x$ and $p-y$. Section \ref{sec_integrableCase} will, for integrable $\bar{H}$, bound $\|q-Q\|$ and $\|p-P\|$ till large time. % This bound will demonstrate that approximation accuracy and computation efficiency can be simultaneously achieved.
%: for instance, it turns out that $\omega$ only needs to be bigger than a critical value, but not infinity, for the numerical error to be arbitrarily small, and $\delta$ doesn't have to be small enough to resolve the oscillation induced by $\omega H_C$ either.
Section \ref{sec_nonIntegrableCase} will show if $H$ is not integrable, although long time accuracy of trajectory should not be expected,
it is still possible to numerically capture statistical behaviors of the system, at least in a nonlinear Schr\"{o}dinger equation example. The difference between integrabilities of $H$ and $\bar{H}$ is discussed in section \ref{sec_integrabilityExtendedSys}, which further explains why $\omega$ should be larger than a threshold.

\section{Integrable problems: linear growth of long time approximation error}
\label{sec_integrableCase}
Provided that $\bar{H}$ corresponds to an integrable system (which will be the case if, roughly speaking, $H$ is integrable and $\omega\geq \omega_0$ for some constant $\omega_0$; see section \ref{sec_integrabilityExtendedSys}), we will demonstrate that the proposed $l^{th}$-order integrator (eqn. \ref{eqn_highOrderIntegrator}) has a numerical error of
\[
	\mathcal{O}(T \delta^l \omega)
\]
till at least $T=\mathcal{O}(\min(\delta^{-l} \omega^{-1}, \omega^{1/2}))$. Numerical results consistent with this bound for even larger $T$ values will then be shown.

For long time simulation, this linear growth with $T$ is advantageous to non-symplectic integrators, whose errors can grow exponentially, e.g., $\mathcal{O}(e^{CT} \delta^l \omega^l)$ \cite{MR1227985}. It also improves the pioneering symplectic integrator in \cite{pihajoki2015explicit}, which becomes inaccurate after $T=\mathcal{O}(1)$.
One can see $\omega \gtrsim \omega_0$ is sufficient for accuracy, %(no need to be very large)
and the extra error introduced by a finite $\omega$ vanishes as $\delta\rightarrow 0$. Interestingly, an $\omega$ too large is actually discouraged by this error bound. In addition, $\delta \ll \omega^{-1/l}$ is sufficient, and although there is still a trade-off between accuracy and efficiency, a larger $l$ allows $\delta$ to be much larger than $o(1/\omega)$, i.e. no need to resolve the oscillation induced by $\omega H_C$. Accuracy and efficiency are thus simultaneously improved.

The main idea for establishing this bound is to view the numerical solution as discrete samples of the exact solution of some near-by Hamiltonian $\tilde{H}$ (i.e. backward error analysis), and characterize the distance between $\tilde{H}$ and $\bar{H}$ as a small parameter $\epsilon$. An application of KAM theory  \cite{kolmogorov1954conservation,arnol1963proof,moser1962invariant,poschel1982integrability,eliasson1996absolutely,
giorgilli1997kolmogorov} then bounds the differences between action and angle variables in $\tilde{H}$ and $\bar{H}$. Such bounds will specify how these two systems deviate in $q,p$ coordinates and hence quantify the numerical error, because the flows of $\bar{H}$ and $\tilde{H}$ respectively correspond to the exact solution of $H$ and the numerical solution. Similar techniques have been established (see Chap X of \cite{Hairer06} for a review), and the main novelty of our derivation is a refined estimation of $\epsilon$ combined with these techniques. Specifically,
\begin{enumerate}
\item
\label{item_step2}
Denote by $t:=\omega T$ and $h:=\omega \delta$ new time variable and step; in the new time, the Hamiltonian $\bar{H}$ becomes $\frac{1}{\omega}H_A + \frac{1}{\omega}H_B+H_C$. Since Lie bracket of Hamiltonian vector fields corresponds to Poisson bracket of Hamiltonians \cite{MaRa10}, repeated applications of Baker-Campbell-Hausdorff formula show that the time-rescaled version of \eqref{eqn_2ndOrderIntegrator},
\[
	\phi_2^h := \phi_{H_A/\omega}^{h/2} \circ \phi_{H_B/\omega}^{h/2} \circ \phi_{H_C}^h \circ \phi_{H_B/\omega}^{h/2} \circ \phi_{H_A/\omega}^{h/2},
\]
corresponds to the symplectic time-$h$ flow of the Hamiltonian $\tilde{H}= \bar{H}+R$, where the perturbative remainder $R$ is defined by
{
\small
\begin{align}
	R	:=  & - \frac{1}{24} h^2 \left\{\frac{1}{\omega}H_A,\left\{\frac{1}{\omega}H_A,\frac{1}{\omega}H_B\right\}\right\} 
		 		- \frac{1}{24} h^2 \left\{\frac{1}{\omega}H_A,\left\{\frac{1}{\omega}H_A,H_C\right\}\right\} \nonumber\\
		& + \frac{1}{12} h^2 \left\{\frac{1}{\omega}H_B,\left\{\frac{1}{\omega}H_B,\frac{1}{\omega}H_A\right\}\right\} 
		 		- \frac{1}{24} h^2 \left\{\frac{1}{\omega}H_B,\left\{\frac{1}{\omega}H_B,H_C\right\}\right\} \nonumber\\
		& + \frac{1}{12} h^2 \left\{H_C,\left\{H_C,\frac{1}{\omega}H_A\right\}\right\} 
		 		+ \frac{1}{12} h^2 \left\{H_C,\left\{H_C,\frac{1}{\omega}H_B\right\}\right\} \nonumber\\
		& + \frac{1}{12} h^2 \left\{\frac{1}{\omega}H_B,\left\{H_C,\frac{1}{\omega}H_A\right\}\right\} 
		 		+ \frac{1}{12} h^2 \left\{H_C,\left\{\frac{1}{\omega}H_B,\frac{1}{\omega}H_A\right\}\right\} + \mathcal{O}(h^4).
	\label{eqn_backwardH}
\end{align}
}Higher-order methods can be similarly analyzed: it is known that a $l$-th order integrator based on Hamiltonian splitting samples the exact flow of an $\mathcal{O}(h^l)$ perturbation of the exact Hamiltonian, where the perturbation is a sum of terms expressible using at least $l$ nested Poisson brackets of $H_A/\omega$, $H_B/\omega$, and $H_C$ \cite{Hairer06}. Therefore, the magnitude of $R$ is at most $\mathcal{O}(h^l/\omega)$, because any nonzero nested Poisson bracket has to involve $H_A/\omega$ or $H_B/\omega$ at least once (otherwise, $\{H_C,H_C\}=0$ leads to a zero result).

\item
Assuming $q,p,x,y$ and $R$ are bounded, conservation of $\tilde{H}$ implies boundedness of $H_C$, which leads to $\|q-x\|=\mathcal{O}(1/\sqrt{\omega})$ and $\|p-y\|=\mathcal{O}(1/\sqrt{\omega})$. Under a canonical transformation $\alpha=q-x$, $\beta=p-y$, $\bar{Q}=(q+x)/2$, $\bar{P}=(p+y)/2$,
\[
	\tilde{H}=\frac{1}{\omega} H\left(\bar{Q}+\frac{\alpha}{2},\bar{P}-\frac{\beta}{2}\right) + \frac{1}{\omega} H\left(\bar{Q}-\frac{\alpha}{2},\bar{P}+\frac{\beta}{2}\right) + \frac{1}{2} \|\alpha\|^2 + \frac{1}{2} \|\beta\|^2 + R
\]
for some small $R(\alpha,\beta,\bar{Q},\bar{P})$. Assuming $H(\cdot,\cdot)$ is analytic, then $\tilde{H}$ is analytic in $\bar{Q},\bar{P},\alpha,\beta$. Since $\bar{Q},\bar{P}$ remain bounded, $\alpha(0)=\beta(0)=\textbf{0}$, and $\alpha(t),\beta(t)=\mathcal{O}(1/\sqrt{\omega})$, a multiscale analysis based on normal form shows that $\alpha(t),\beta(t)=\mathcal{O}(1/\omega)$ till at least $t=\mathcal{O}(\omega^{3/2})$ (see appendix \ref{sec_appRefinedEstimate}); this corresponds to $T=\mathcal{O}(\sqrt{\omega})$ in the original time, which is still a long time.

\item
We now refine the perturbation magnitude estimation. Each time $H_A/\omega$ or $H_B/\omega$ appears in a nested Poisson bracket term in \eqref{eqn_backwardH}, that term gets scaled by $1/\omega$. On the other hand, when $H_C$ appears, by the definition of Poisson bracket,
\[
	\{X,H_C\}=\frac{\partial X}{\partial q} \cdot \frac{\partial H_C}{\partial p}
	       -\frac{\partial X}{\partial p} \cdot \frac{\partial H_C}{\partial q}
	       +\frac{\partial X}{\partial x} \cdot \frac{\partial H_C}{\partial y}
	       -\frac{\partial X}{\partial y} \cdot \frac{\partial H_C}{\partial x}.
\]
Since derivatives of $H_C$ are $\pm (q-x)$ or $\pm (p-y)$, all of them lead to a scaling by $\mathcal{O}(1/\omega)$ too. Therefore, any $l$-nested Poisson bracket term containing $l+1$ Hamiltonians is actually $\mathcal{O}(h^l/\omega^{1+l})$ till at least $t=\mathcal{O}(\omega^{3/2})$.

\item
Let $\epsilon=h^l/\omega^l$. Then $\tilde{H}-\bar{H}=R=\mathcal{O}(\epsilon/\omega)$ till at least $t=\mathcal{O}(\omega^{3/2})$, and it is a $\mathcal{O}(\epsilon)$ perturbation when compared with $\bar{H}$. A KAM type estimate (see \cite{Hairer06}, Chap X) shows that under technical conditions (e.g., Diophantine and non-degenerate initial condition) the solution of
$\tilde{H}$ differs from that of $\bar{H}$ by $\mathcal{O}(t \epsilon)$ for at least $t=\mathcal{O}(\min(\epsilon^{-1}, \omega^{3/2}))$ (minimum of the two because $R$ estimate is only valid till $t=\mathcal{O}(\omega^{3/2})$).

Since $\tilde{H}$ corresponds to the numerical solution, and $\bar{H}$ corresponds to the exact solution, the numerical error is thus $\mathcal{O}(t h^l / \omega ^l)$. Converting back to the original time, this corresponds to a numerical error of
$\mathcal{O}(T \delta^l \omega)$
till at least $T=\mathcal{O}(\min(\delta^{-l} \omega^{-1}, \omega^{1/2}))$,
where $\delta$ is the step size used by the proposed integrator \eqref{eqn_highOrderIntegrator} in the original time.
\end{enumerate}

\subsection{A numerical demonstration}
\label{sec_1DOF}
We now demonstrate the $\mathcal{O}(T \delta^l \omega)$ error bound on a 1 degree of freedom system with $H(Q,P)=(Q^2+1)(P^2+1)/2$. This is a system with obtainable exact solution, and thus long-time numerical errors can be accurately quantified.

%\begin{figure}[h]
%\centering
%\includegraphics[width=0.4\textwidth]{}
%\caption{\footnotesize Exact solutions of $H(Q,P)=(Q^2+1)(P^2+1)/2$ in phase space.}
%\label{fig_1DOF_PhasePlot}
%\end{figure}

To derive the exact solution, note energy level sets in this system are closed curves
symmetric about $P=0$ and about $Q=0$. %(see figure \ref{fig_1DOF_PhasePlot} for illustration)
Half period of the dynamics is governed by%, using Hamilton's equation and conservation of energy, as
\[
	\dot{Q}=(1+Q^2)\sqrt{\frac{2E}{1+Q^2}-1}, \text{ where } P(0)=0, ~ E=(1+Q(0)^2)/2, \text{ and } Q(0)<0 \text{ is assumed},
\]
and this dynamics is till $\mathcal{T}>0$ such that $Q(\mathcal{T})=-Q(0)$. Its exact solution can be obtained using Jacobi's elliptic function, namely
\[
	Q(t)=Q(0) \text{ cn} \left( \left. t\sqrt{1+Q(0)^2} \right| \frac{Q(0)^2}{1+Q(0)^2} \right),
\]
and thus $\mathcal{T}$ can be obtained using Gauss hypergeometric function
\[
	\mathcal{T}=\pi \,_2F_1 (0.5,0.5;1, -Q(0)^2);
\]
see \cite{abramowitz1964handbook} for more details about these special functions. The exact solution of $H$ is now available till arbitrary time, because it is $2\mathcal{T}$-periodic, and its other half period (for time $[(2n+1)\mathcal{T},2n \mathcal{T}],n \in \mathbb{Z}$) is symmetric to the previously obtained half (for time $[2n \mathcal{T},(2n+1) \mathcal{T}],n \in \mathbb{Z}$) about the origin.

\begin{figure}[h]
\centering
\hspace{-10pt}
\subfigure[Proposed method $\phi_4^\delta$]{
\includegraphics[width=0.5\textwidth]{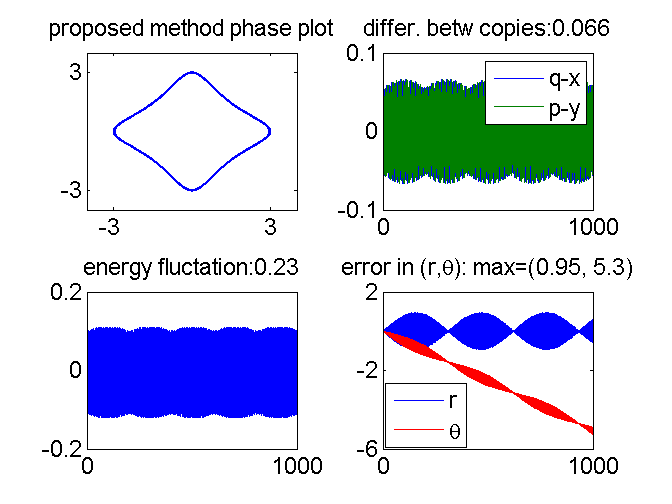}
\label{fig_1DOF_comparison_new}
}
\hspace{-15pt}
\subfigure[4th-order Runge-Kutta]{
\includegraphics[width=0.5\textwidth]{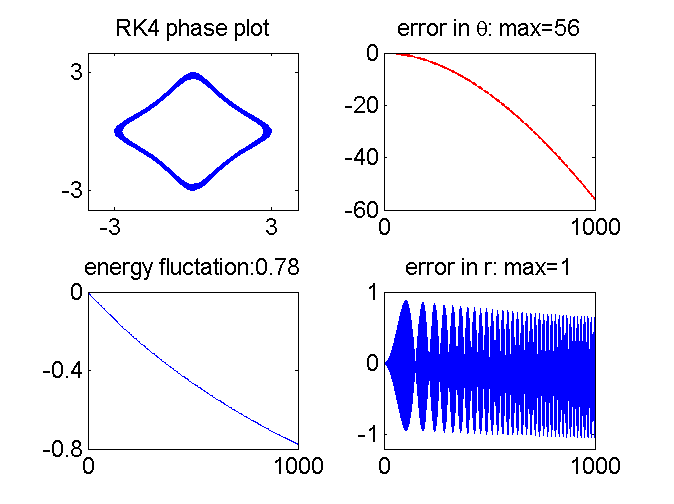}
\label{fig_1DOF_comparison_RK4}
}
\caption{\footnotesize Comparison between the proposed integrator and a classical nonsympletic method. Timestep $\delta=0.1$ for both methods, $Q(0)=-3,P(0)=0$, $\omega=20$, and $T=1000$.}
\label{fig_1DOF_comparison}
\end{figure}

Figure \ref{fig_1DOF_comparison} compares long time simulations by a 4th-order proposed method and the 4th-order Runge-Kutta. Errors of $Q,P$ trajectories are investigated in polar coordinates (figure \ref{fig_1DOF_comparison_RK4} right bottom panel and figure \ref{fig_1DOF_comparison_RK4} right column). Runge-Kutta has an exponentially increasing phase error and a drifting amplitude error corresponding to undesired numerical viscosity, whereas the proposed method has linearly growing phase error and bounded amplitude error, consistent with the error bound but extending much beyond $\mathcal{O}(\sqrt{\omega})$ time.

One also sees that $\delta$ can indeed be much larger than what's needed for resolving $\mathcal{O}(1/\omega)$ timescale oscillations. In addition, $\omega$ only needs to be larger than $H(Q(0),P(0))$ in magnitude.

Agreements with other aspects of the error bound are illustrated in tables \ref{tab_errorDependOmega} and \ref{tab_errorDependDelta}. The fact that the error is high-order in $\delta$ but only 1st-order in $\omega$ is another important property, as it leads to simultaneous accuracy and efficiency when $l$ is large ($l=4$ here).

\begin{table}[h]
  \begin{tabular}{r | c | c | c | c}
	  $\omega = $ & 20 & 40 & 80 & 160 \\
	  \hline
      max amplitude error $\approx$ & $6.2\times 10^{-8}$ & $1.2\times 10^{-7}$ & $2.5\times 10^{-7}$ & $5\times 10^{-7}$ \\
      \hline
      max phase error $\approx$ & $5.6\times 10^{-8}$ & $1.1\times 10^{-7}$ & $2.2\times 10^{-7}$ & $4.5\times 10^{-7}$
  \end{tabular}
  \caption{\footnotesize $\phi_4^\delta$ error is proportional to $\omega$. $T=100$ and $\delta=0.001$ fixed, $Q(0)=-3,P(0)=0$.}
  \label{tab_errorDependOmega}
\end{table}

\begin{table}[h]
  \begin{tabular}{r | c | c | c | c | c}
      $\delta = $ & $10^{-1}$ & $10^{-1.5}$ & $10^{-2}$ & $10^{-2.5}$ & $10^{-3}$ \\
      \hline
      max amplitude error $\approx$ & $0.76$ & $5.8\times 10^{-2}$ & $6.1\times 10^{-4}$ & $6.2\times 10^{-6}$ & $6.2\times 10^{-8}$ \\
      \hline
      max phase error $\approx$ & $0.76$ & $5.2\times 10^{-2}$ & $5.6\times 10^{-4}$ & $5.6\times 10^{-6}$ & $5.6\times 10^{-8}$
  \end{tabular}
  \caption{\footnotesize $\phi_4^\delta$ error is proportional to $\delta^4$. $T=100$ and $\omega=20$ fixed, $Q(0)=-3,P(0)=0$. The $\delta=0.1$ column anomaly is because $T\delta^l\omega$ is too large to be in the asymptotic regime of the error bound.}
  \label{tab_errorDependDelta}
\end{table}

\begin{figure}[h!]
\centering
\hspace{-10pt}
\subfigure[2nd-order version of the proposed method $\phi_2^\delta$]{
\includegraphics[width=0.5\textwidth]{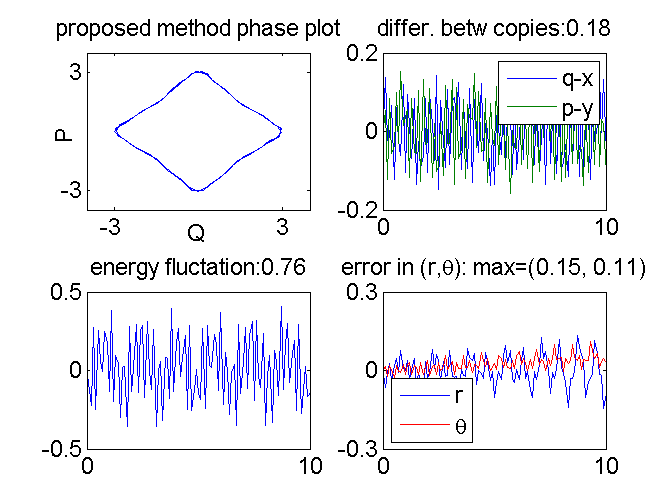}
\label{fig_1DOF_comparison_new2nd}
}
\hspace{-15pt}
\subfigure[recommended method in \cite{pihajoki2015explicit}]{
\includegraphics[width=0.5\textwidth]{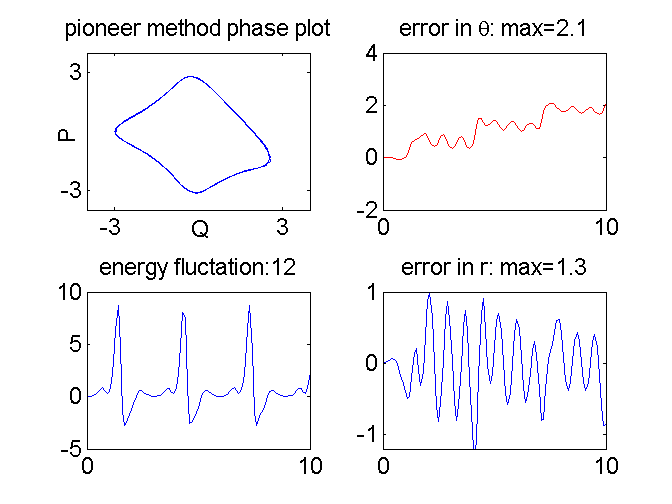}
\label{fig_1DOF_comparison_RK4}
}
\caption{\footnotesize Comparison between a low order version of the proposed integrator and the pioneer nonsympletic method. Timestep $\delta=0.1$ for both methods, $Q(0)=-3,P(0)=0$, $\omega=20$, and $T=10$.}
\label{fig_1DOF_comparison2}
\end{figure}

Figure \ref{fig_1DOF_comparison2} compares the pioneer method recommended in \cite{pihajoki2015explicit} ($Q\tilde{P}\tilde{Q}P$ with optimized $P_1$ mixing and projection; 2nd-order) with the proposed integrator (eq.\ref{eqn_2ndOrderIntegrator}; here we chose a low-order version so that the focus is on the benefit of symplecticity). Important to recall is the pioneer method is not symplectic because additional mixing and projection steps were introduced for improved accuracy (without such steps the accuracy time span will be even shorter; results not shown), but such steps breaks symplecticity. It is thus not surprising the pioneer method remains accurate only for a short time. Note however that the pioneer method is symmetric and exhibits no `secular' energy deviation.

\subsection{A second numerical example}
Consider the Schwarzschild geodesics problem simulated in \cite{pihajoki2015explicit} (with typos in the Hamiltonian, initial values, and precession rate estimation corrected). The geodesic can be cast as the solution of the 3 degrees of freedom ($Q=[t,r,\phi]$ and $P=[p_t,p_r,p_\phi]$) Hamiltonian system governed by
\[
	H=\frac{1}{2}\left[ \left( 1-\frac{2}{r} \right)^{-1} p_t^2-\left( 1-\frac{2}{r} \right)p_r^2-\frac{p_\phi^2}{r^2} \right].
\]

\begin{figure}[h!]
\centering
\hspace{-10pt}
\subfigure[Geodesic solution]{
\includegraphics[width=0.5\textwidth]{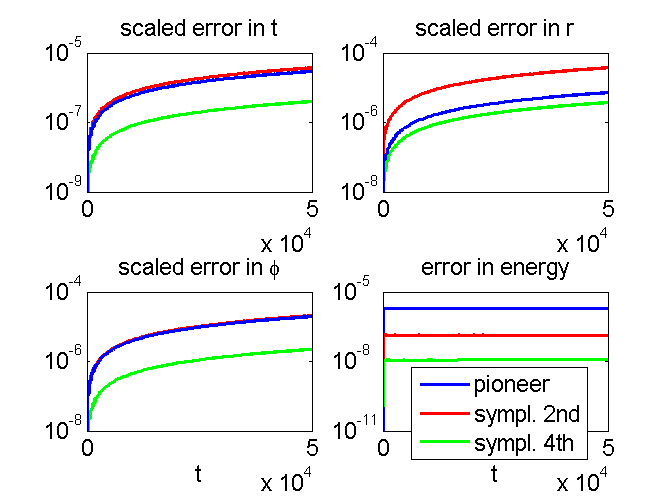}
\label{fig_geodesic_conservative}
}
\hspace{-15pt}
\subfigure[Dissipated solution ($\gamma=10^{-4}$)]{
\includegraphics[width=0.5\textwidth]{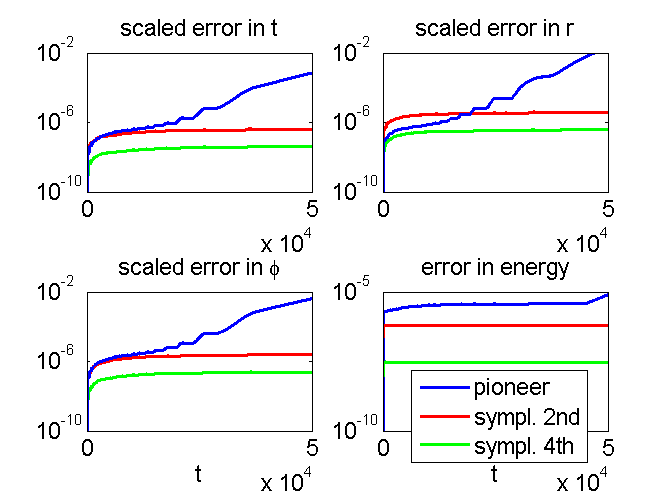}
\label{fig_geodesic_dissipative}
}
\caption{\footnotesize Comparison between the proposed integrators and the pioneer nonsympletic method. Timestep $\delta=0.2$ for both methods, $\omega=2$, $T=50000$, $M=10$, $m=1$, initial condition $Q(0)=[0,20,0], P(0)\approx [0.982,0,-4.472]$ corresponds to initial semi-major axis $a(0)=20$, eccentricity $e(0)=0$, $p_r(0)=0$, $p_\phi(0)=\sqrt{r(0)}$, and $p_t(0)$ solves $H(0)=m^2/2$. Plotted were maxima of relative errors up to given times, respectively scaled by dividing over Keplerian period $2\pi\sqrt{a(0)^3/M}$ for $t$, over $a(0)(1+e(0))$ for $r$, over $2\pi$ for $\phi$, and unscaled for $H$.}
\label{fig_geodesic}
\end{figure}

Figure \ref{fig_geodesic_conservative} estimates accuracies of Schwarzschild geodesic computed by 2nd- and 4th-order proposed integrators $\phi_2^\delta$ (eq.\ref{eqn_2ndOrderIntegrator}) and $\phi_4^\delta$ (eq.\ref{eqn_highOrderIntegrator}), as well as that by the pioneer nonsymplectic method recommended in \cite{pihajoki2015explicit} ($Q\tilde{P}\tilde{Q}P$ with $P_1$ mixing and projection, which was the optimized choice for this problem). Note accuracy can only be estimated because no exact solution is available for quantifying the numerical error, and we used the adaptive MATLAB ode45 with relative and absolute error tolerances both set to $10^{-20}$ to generate a benchmark for error estimation; although the long time fidelity of this benchmark cannot be guaranteed, throughout the simulation it only produced $< 7\times 10^{-15}$ deviation from the conserved energy value. One sees $\phi_4^\delta$ is the most accurate of the three in all aspects, while $\phi_2^\delta$ is less accurate on $r$ than the optimized pioneer method despite of its better energy preservation. Worth mentioning is, for this example, the optimized pioneer method allows larger step sizes than $\phi_l^\delta$ (results not shown).

We then evaluate how each method captures the effect of additional non-conservative forces, by adding a simplest-possible dissipation, i.e. consider $\dot{q}=\partial H/\partial p, \dot{p}=\partial H/\partial q-\gamma p$. To add external forces in the proposed and pioneer integrators, we replace the $p$ update in $\phi_{H_A}^\delta$ and the $y$ update in $\phi_{H_B}^\delta$ (eqn.\ref{eqn_flowMaps}) by
\[
	p \mapsto p+\delta (-\partial_q H(q,y) - \gamma y), \qquad
	y \mapsto y+\delta (-\partial_x H(x,p) - \gamma p).
\]
Integration accuracies are again estimated by comparing to an adaptively integrated fine benchmark (Figure \ref{fig_geodesic_dissipative}). With dissipation, long time errors of the symplectic versions are much smaller than the pioneer method. The intuition is, as the force breaks time reversibility, the symmetry of the pioneer method is no longer advantageous; symplectic integrators, however, are known to well approximate changes in phase-space volume \cite{BoOw:09,ObTa13}. In fact, in a longer simulation ($T=10^5$), the pioneer method became unstable while other methods remained bounded and relatively accurate (results not shown).

\section{Non-integrable system: a numerical demonstration on the weak turbulent nonlinear Schr\"{o}dinger equation}
\label{sec_nonIntegrableCase}
It is known that nonlinear Schr\"{o}dinger equation is non-integrable in $\geq 2$ spatial dimensions and exhibits weak turbulence \cite{dyachenko1992optical}. %; also known as wave turbulence), where the mass, momentum and energy of the solution shift to increasingly higher frequencies (i.e., smaller spatial scales) as time approaches infinity (c.f., strong turbulence, where the cascade happens in finite time).  
Conditions under which a finite-dimensional Hamiltonian system of Fourier coefficients can approximate the nonlinear Schr\"{o}dinger equation on 2-torus were rigorously demonstrated in \cite{colliander2010transfer}, while extensive usages of similar models preceded this rigorous justification \cite{cai1999spectral, cai2001dispersive,majda1997one,milewski2002resonant,deville2007nonequilibrium}. This system has a nonseparable Hamiltonian that can be written as
\[
	H(q,p)=\frac{1}{4}\sum_{i=1}^N \left( q_i^2+p_i^2 \right)^2-\sum_{i=2}^N \left( p_{i-1}^2 p_i^2 + q_{i-1}^2 q_i^2 - q_{i-1}^2 p_i^2 - p_{i-1}^2 q_i^2 + 4 p_{i-1}p_i q_{i-1}q_i \right).
\]
Note an explicit symplectic integrator was proposed in \cite{blanes2002symplectic} for polynomial Hamiltonians, which suit this system. That method requires more computations per timestep because it's based on splitting the Hamiltonian into mononials, but it is symplectic in the original phase space, which is certainly an advantage. Our purpose is not to compare that method with the more general method proposed here, but only to numerically access whether our method still has good long time performances for a non-integrable system.

To do so, we focus on first integrals and the statistical behavior of long-time dynamics. If one denotes by $I_i=q_i^2+p_i^2$ the mass of each mode, the total mass $I:=\sum I_i$ can be shown as a second first integral of the system in addition to energy conservation. Also, although mathematically difficult to prove, it is widely believed the system, due its turbulent nature, is ergodic on first integral foliations. We thus assume an ergodic measure of $\delta(H(q,p)-E)\delta(\sum I_i-I) \, dq \, dp$. It is easy to show this constrained Liouville measure is at least an invariant measure. Under the ergodicity assumption, long time averages of phase space observables converge to their spatial averages with respect to the ergodic measure, and mode relabeling symmetry of $H$ leads to, if $N=2$, that
\begin{equation}
	\lim_{T\rightarrow\infty} \langle I_1 \rangle (T) = \lim_{T\rightarrow\infty} \langle I_2 \rangle (T) = \frac{1}{2}I, \text{ where } \langle I_i \rangle (T)=\frac{1}{T} \int_0^T I_i(t) dt.
	\label{eq_ergodicConvergence}
\end{equation}

\begin{figure}
\centering
\includegraphics[width=\textwidth]{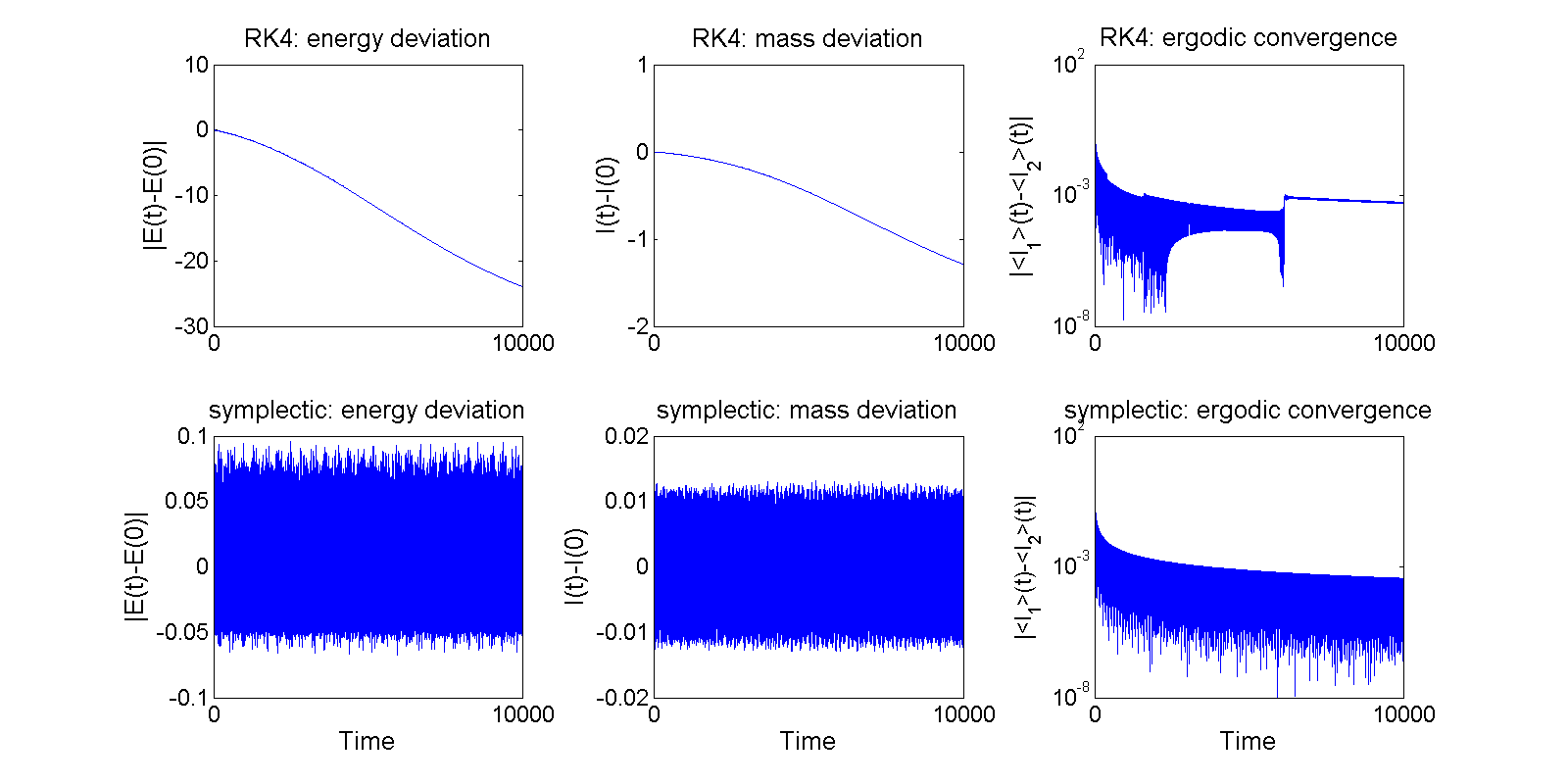}
\caption{\footnotesize Long time simulations of NLS with $N=2$ by 4th-order Runge-Kutta and a proposed integrator $\phi_4^\delta$. $[q_1,p_1,q_2,p_2](0)=[3,1,0.01,0]$, $\delta=0.01$, $\omega=100$. Long time convergence of the trajectory towards the ergodic limit is quantified in the right column based on $\langle I_1 \rangle (t)-\langle I_2 \rangle (t)$ (see eq. \ref{eq_ergodicConvergence}); this observable was chosen to reduce the interference with the numerical loss of total mass in RK4. Computation done using win32 MATLAB R2010a on x64 Windows 7 i7-4600U CPU.}
\label{fig_NLS_longTime}
\end{figure}

Figure \ref{fig_NLS_longTime} shows that the proposed method better captures total energy and mass conservations, as well as the assumed convergence toward ergodicity \eqref{eq_ergodicConvergence}. The standard non-symplectic Runge-Kutta seems to be less accurate in capturing the assumed equilibration, which normally happens at a more consistent rate. In this sense, even though the error analysis in section \ref{sec_integrableCase} doesn't apply to non-integrable systems, the proposed integrator still exhibits better long-time performance than its non-symplectic counterpart, at least in this example.

\begin{figure}
\centering
\includegraphics[width=\textwidth]{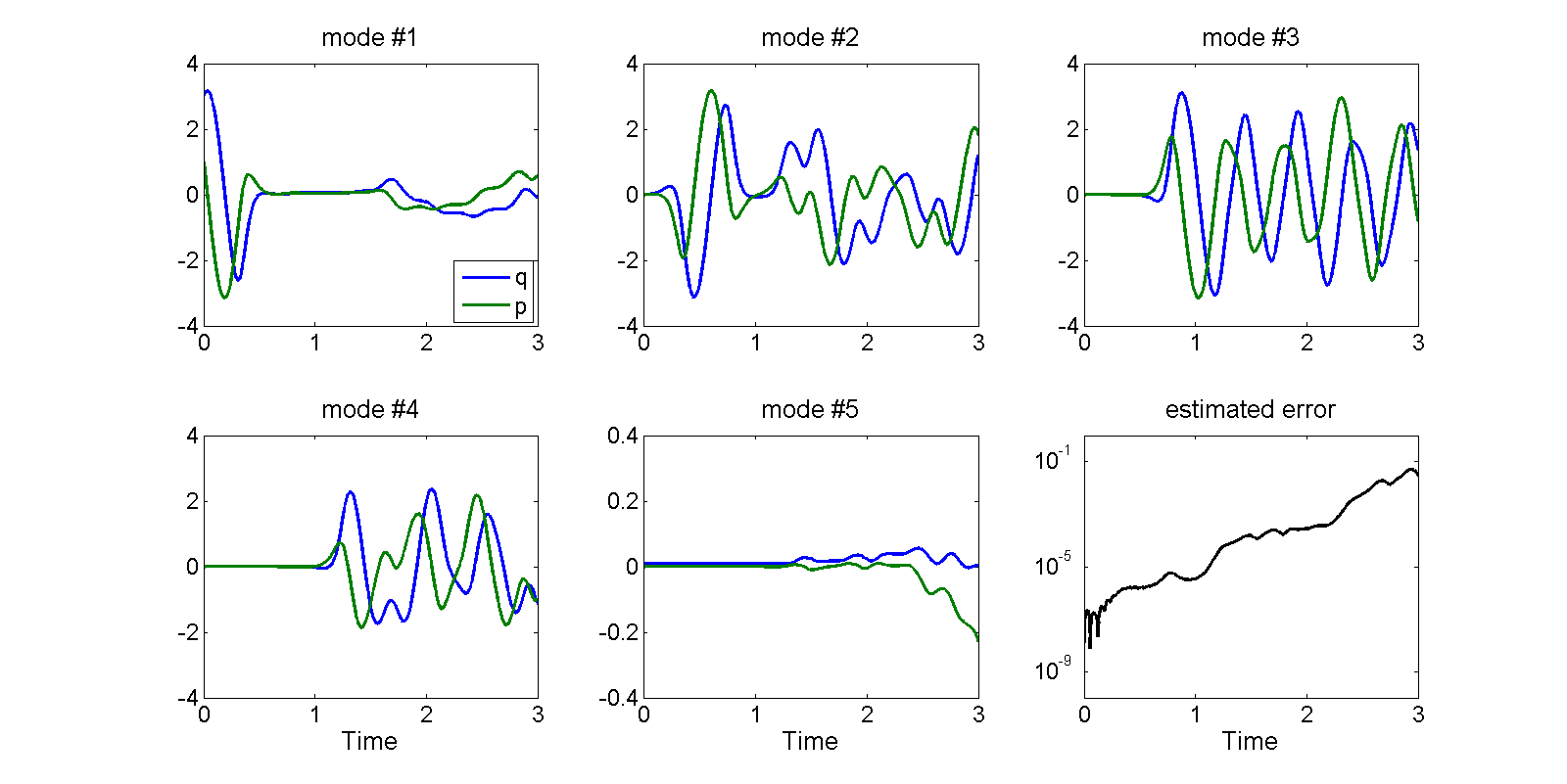}
\caption{\footnotesize Short time simulation of NLS with $N=5$ by $\phi_4$. $[q_1,p_1](0)=[3,1]$, $[q_i,p_i](0)=[0.01,0]$ for $i\neq 1$, $\delta=0.001$ for a closer examination of details, $\omega=100$. Error is estimated by comparing to fine RK4 simulation with $\delta=10^{-4}$.}
\label{fig_NLS_shortTime}
\end{figure}

For completeness, figure \ref{fig_NLS_shortTime} also illustrates an $\mathcal{O}(1)$ time simulation of the proposed method for a larger $N$, where weak turbulent cascade is clearly observed.

\section{On integrability of the extended system}
\label{sec_integrabilityExtendedSys}
As discussed above, symplectic integrators have desirable long-time performances for integrable systems (see \cite{calvo1995accurate, quispel1998volume,Hairer06} for non-stiff problems, and section \ref{sec_integrableCase} for our specific stiff problem). However, if the system has a positive Lyapunov exponent, symplectic integrator can be as bad as a generic integrator, because its error can grow exponentially with time as local truncation error propagates along.

\begin{figure}
\centering
\hspace{-30pt}
\subfigure[no restraint.]{
\includegraphics[width=0.4\textwidth]{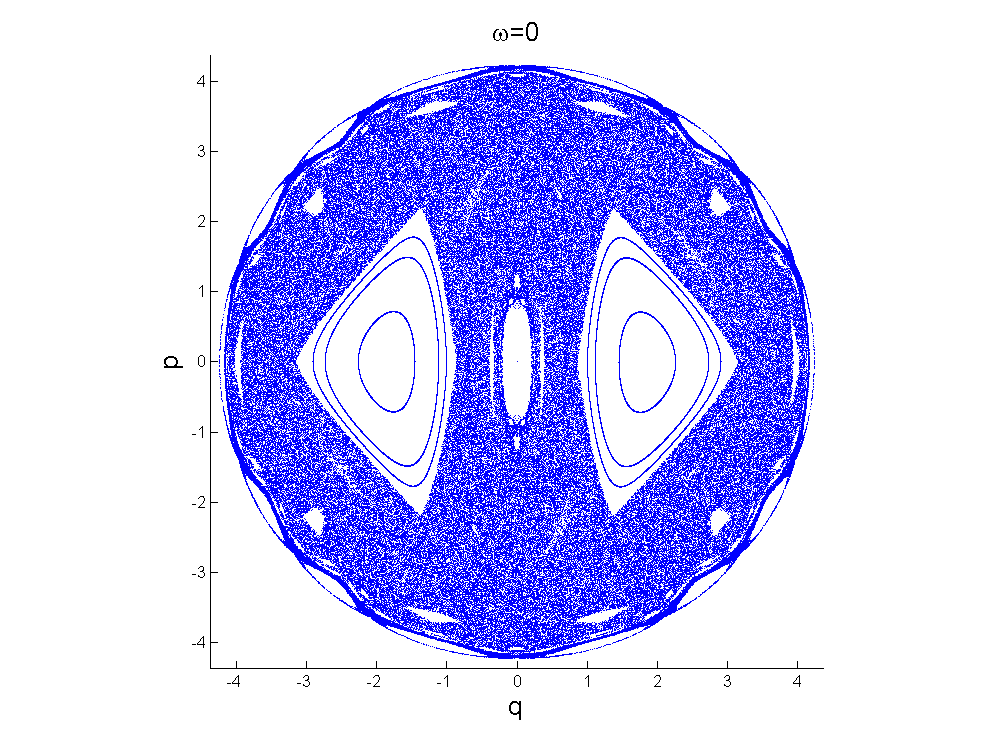}
\label{fig_extendedIntegrability_w0}
}
\hspace{-45pt}
\subfigure[small restraint.]{
\includegraphics[width=0.4\textwidth]{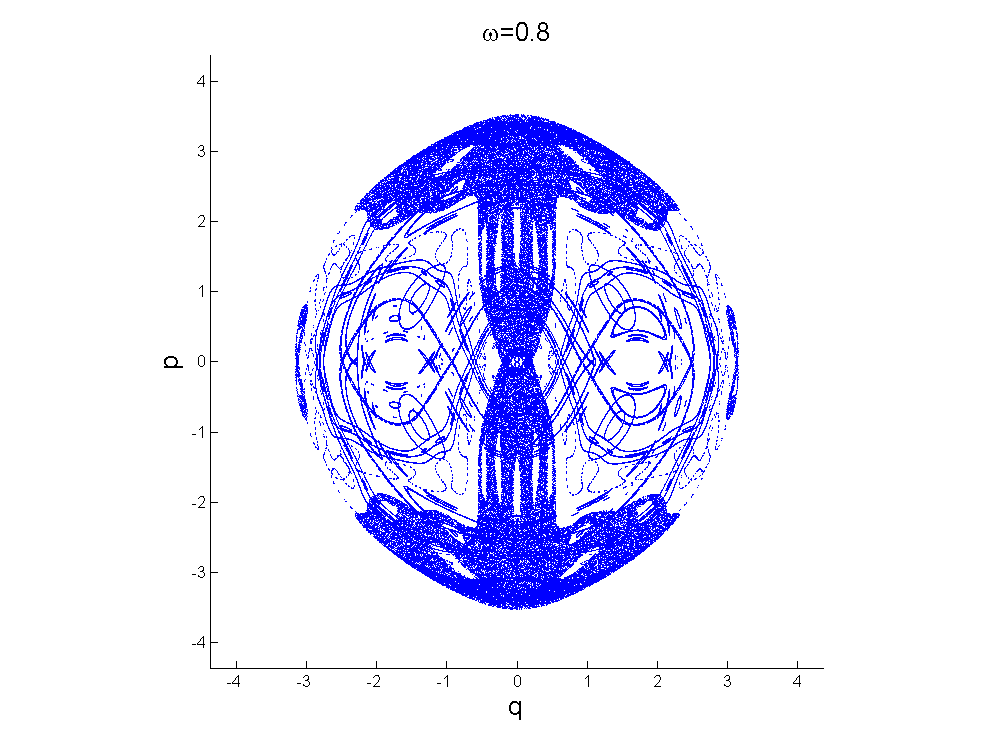}
\label{fig_extendedIntegrability_w0p8}
}
\hspace{-45pt}
\subfigure[strong restraint.]{
\includegraphics[width=0.4\textwidth]{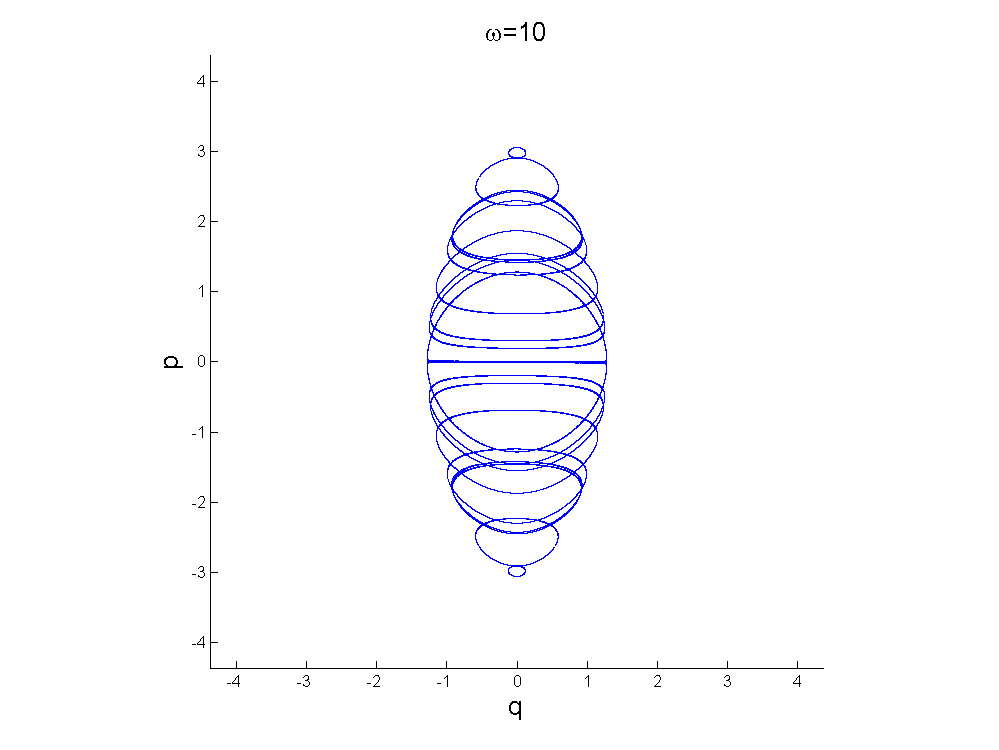}
\label{fig_extendedIntegrability_w10}
}
\hspace{-30pt}
\caption{\footnotesize Poincar\'e section of $\bar{H}$ with $H(Q,P)=(Q^2+1)(P^2+1)/2$ at surface $x=0$ on constant energy shell $\bar{H}=10$. Initial conditions are uniformly sampled in $q,p$ plane. Note curves do intersect because each $q,p$ pair corresponds to two $y$ values.}
\label{fig_extendedIntegrability}
\end{figure}

Unfortunately, the system governed by $H(q,y)+H(x,p)$ in extended phase space may not be integrable even if $H(Q,P)$ is integrable in the original phase space. For instance, consider the 1 degree of freedom problem %$H(Q,P)=(Q^2+1)(P^2+1)/2$ 
studied in section \ref{sec_1DOF}. The original system is integrable because $H$ is a first integral. Figure \ref{fig_extendedIntegrability} illustrates one Poincar\'e section of the $\omega=0$ extended system, where large chaotic seas demonstrate non-integrability. This explains why a symplectic integrator for $\bar{H}$ with $\omega=0$, as considered in \cite{pihajoki2015explicit}, does not have satisfactory performances beyond $\mathcal{O}(1)$ time.

However, $H_C$ is integrable. Thus, as $\omega$ increases, a larger proportion of the phase space for $\bar{H}=H_A+H_B+\omega H_C$ corresponds to regular behaviors (see \cite{kolmogorov1954conservation,arnol1963proof,moser1962invariant,poschel1982integrability,eliasson1996absolutely,
giorgilli1997kolmogorov}). Indeed, Poincar\'e sections in figure \ref{fig_extendedIntegrability_w0p8},\ref{fig_extendedIntegrability_w10} show smaller chaotic seas when $\omega=0.8$, and no evidence of chaotic sea but only invariant tori when $\omega=10$. This suggests that the integrability assumption in section \ref{sec_integrableCase} is reasonable as long as the original system is integrable and $\omega$ is larger than a threshold $\omega_0$, with exception initial conditions whose measure vanishes. %This helps explain the proposed integrator's pleasant long-time numerical performance.

\section{Appendix: refined estimates of $\alpha$ and $\beta$}
\label{sec_appRefinedEstimate}
This section shows that $\alpha,\beta$ are not just $\mathcal{O}(1/\sqrt{\omega})$ but in fact $\mathcal{O}(1/\omega)$ at least for a long time. As this is a self-contained section, we set up the problem in the original time, denote by $t$ the corresponding time variable (as opposed to $T$ in the main text) and by dot the $t$ derivative; the Hamiltonian is 
\begin{equation}
	\hat{H}=H\left(\bar{Q}+\frac{\alpha}{2},\bar{P}-\frac{\beta}{2}\right) + H\left(\bar{Q}-\frac{\alpha}{2},\bar{P}+\frac{\beta}{2}\right) + \frac{1}{2} \omega \|\alpha\|^2 + \frac{1}{2} \omega \|\beta\|^2 + \mathcal{R}.
	\label{eq_Hhat}
\end{equation}
We will introduce a near identify transformation to express the governing Hamilton's equations in a nonstandard 2nd-order normal form, which we will utilize to refine the estimates of $\alpha$, $\beta$. To begin, note $\omega J$ is the coefficient matrix associated with the linear dynamical system generated by $H_0=\frac{1}{2} \omega \alpha^2 + \frac{1}{2} \omega \beta^2$, where $J=\begin{bmatrix} 0 & I \\ -I & 0 \end{bmatrix}$, and $\exp(t \omega J)=\mathcal{O}(1)$ for all real $t$. We use $x\in \mathcal{R}^{2d}$ to represent $[\alpha, \beta]$.

\begin{Lemma}
	Given a $2d$-dimensional real symmetric-matrix valued function $S(s)$,
	\[
		\Omega(s):=\frac{1}{2\pi}\int_0^{2\pi} e^{-J \tau} J S(s) e^{J \tau} \, d\tau
	\]
	is a real skew-symmetric matrix valued function. Moreover, its associated fundamental matrix $\Phi(s)$, defined as the solution of
	\[
		\Phi'(s)=\Omega(s) \Phi(s),	\quad \Phi(0)=I,
	\]
	satisfies $\Phi(s)=\mathcal{O}(1)$ and $\Phi(s)^{-1}=\mathcal{O}(1)$ for all $s$.
	\label{lemma_estimateResonantLinearPart}
\end{Lemma}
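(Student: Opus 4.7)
The proof naturally splits into three elementary steps: derive a closed form for $\Omega(s)$, read off its structural properties, then use skew-symmetry to deduce that $\Phi(s)$ is orthogonal. The only substantive calculation exploits the special identity $J^2=-I_{2d}$, and I expect no real obstacle beyond bookkeeping.

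First, I would use $J^2=-I$ to write $e^{J\tau}=(\cos\tau)I+(\sin\tau)J$ and $e^{-J\tau}=(\cos\tau)I-(\sin\tau)J$. Direct expansion then gives
\[
e^{-J\tau}\,JS(s)\,e^{J\tau}=\cos^2\tau\,JS+\sin\tau\cos\tau\,(JSJ+S)+\sin^2\tau\,SJ.
\]
Averaging $\tau\in[0,2\pi]$, the cross term integrates to zero while the squared trigonometric factors each average to $1/2$, so
\[
\Omega(s)=\tfrac{1}{2}\bigl(JS(s)+S(s)J\bigr).
\]
Realness is manifest. Skew-symmetry is a one-line check using $S^T=S$ and $J^T=-J$:
\[
\Omega(s)^T=\tfrac{1}{2}(S^TJ^T+J^TS^T)=-\tfrac{1}{2}(SJ+JS)=-\Omega(s).
\]
A coordinate-free alternative would be to observe that $JS$ is a Hamiltonian matrix and that conjugation by the symplectic $e^{J\tau}$ preserves this property; then the $2\pi$-periodicity of $e^{J\tau}$ makes the average translation-invariant, giving $[J,\Omega]=0$, and combining with $\Omega^TJ=-J\Omega$ and multiplying by $J$ yields $\Omega^T+\Omega=0$.

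For the uniform bounds on $\Phi$, I would exploit skew-symmetry of $\Omega$ by differentiating the Gram matrix along the flow:
\[
\frac{d}{ds}\bigl(\Phi(s)^T\Phi(s)\bigr)=\Phi^T\Omega^T\Phi+\Phi^T\Omega\Phi=\Phi^T(\Omega^T+\Omega)\Phi=0.
\]
The initial condition $\Phi(0)=I$ then forces $\Phi(s)^T\Phi(s)\equiv I$, i.e.\ $\Phi(s)$ is orthogonal for every $s$. Consequently $\Phi(s)^{-1}=\Phi(s)^T$ and $\|\Phi(s)\|_2=\|\Phi(s)^{-1}\|_2=1$ uniformly in $s$, yielding the claimed $\mathcal{O}(1)$ bounds. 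The whole argument reduces to two ingredients: averaging over a full period of $e^{J\tau}$ symmetrizes $JS$ into $\tfrac{1}{2}(JS+SJ)$, and a linear flow with skew-symmetric generator preserves the Euclidean inner product.
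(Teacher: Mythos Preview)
Your proof is correct and follows essentially the same strategy as the paper: compute the average explicitly to identify $\Omega$, verify skew-symmetry, then differentiate $\Phi^T\Phi$ to conclude. The only cosmetic differences are that the paper carries out the averaging in $2\times 2$ block form (arriving at $\Omega=\tfrac{1}{2}\begin{bmatrix} B^T-B & A+D \\ -(A+D) & B^T-B \end{bmatrix}$, which is exactly your $\tfrac{1}{2}(JS+SJ)$ in coordinates) and then bounds $\Phi^{-1}$ by a separate computation $d\Phi^{-1}/ds=-\Phi^{-1}\Omega$, whereas you short-circuit this by reading off $\Phi^{-1}=\Phi^T$ from orthogonality.
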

\begin{proof}
	Assume $S$ in block form is $S=\begin{bmatrix} A & B \\ B^T & D \end{bmatrix}$ where $A^T=A, D^T=D$. Then
	\begin{align*}
		\Omega &= \frac{1}{2\pi}\int_0^{2\pi} \begin{bmatrix} I \cos t & -I \sin t \\ I \sin t & I \cos t \end{bmatrix}
			\begin{bmatrix} B^T & D \\ -A & -B \end{bmatrix} 
			\begin{bmatrix} I \cos t & I \sin t \\ -I \sin t & I \cos t \end{bmatrix} \, dt \\
			&= \frac{1}{2} \begin{bmatrix} B^T-B & A+D \\ -(A+D) & B-B^T \end{bmatrix},
	\end{align*}
	and it is real skew-symmetric.
	
	To bound $\Phi(s)$, note $(\Phi^T \Phi)'=(\Omega \Phi)^T \Phi+\Phi^T (\Omega\Phi)=0$ and thus $\|\Phi(s)\| =\|\Phi(0)\|=1$.
	
	$\Phi(s)^{-1}$ can be similarly bounded since it is easy to verify that $d\Phi^{-1}/ds=-\Phi^{-1}\Omega$.
\end{proof}

\begin{Theorem}
	Consider in $\mathcal{R}^{2d}$ an ODE
	\begin{align*}
		\dot{x} &= \omega J x + F_0(t) + F_1(t) x + \sum_{i=1}^{2d} x^T F_{2i}(t) x \textbf{e}_i + \mathcal{O}(x^3),
	\end{align*}
	where solutions are assumed to exist and $x$ satisfies $x=\mathcal{O}(1/\sqrt{\omega})$, $x(0)=\textbf{0}$, $\textbf{e}_i$'s are standard basis of $\mathcal{R}^{2d}$, $F_0, F_1, F_{2i}$ have bounded derivatives, and $F_1(t)=J S(t)$ for some symmetric-matrix-valued $S(t)$. Then
	\[
		x(t) = \mathcal{O}(1/\omega), \quad \text{till at least } t=\mathcal{O}(\sqrt{\omega}).
	\]
	\label{thm_normalForm}
\end{Theorem}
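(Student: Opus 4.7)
My plan is to combine a rotating-frame change of variable with a first-order averaging (near-identity) transformation, and then close the estimate by a variation-of-parameters argument built on the bounded fundamental matrix supplied by Lemma \ref{lemma_estimateResonantLinearPart}. The underlying intuition is that the linear part $\omega J x$ generates a fast rotation of period $2\pi/\omega$, and since $x(0) = \textbf{0}$ with an a priori amplitude bound of $\mathcal{O}(1/\sqrt{\omega})$, all forcing terms enter $x$ only through fast oscillations that integrate to something smaller by a factor of $1/\omega$.

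First I would set $y(t) := e^{-\omega J t} x(t)$. Since $e^{\pm\omega J t}$ is orthogonal and hence $\mathcal{O}(1)$ uniformly in $t$, the bound $|y| = \mathcal{O}(1/\sqrt{\omega})$ is inherited, and the fast linear term disappears, leaving
\[
	\dot y \;=\; a(t,\omega t) \;+\; b(t,\omega t)\, y \;+\; q(t,\omega t,y) \;+\; \mathcal{O}(y^3),
\]
where $a := e^{-\omega J t} F_0(t)$, $b := e^{-\omega J t} F_1(t) e^{\omega J t}$, and $q$ comes from the quadratic terms; every coefficient is $2\pi$-periodic in the fast phase $\tau := \omega t$.

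Next I would perform a near-identity transformation $y = w + \omega^{-1}\Psi(t,\omega t,w)$, where $\Psi$ is the zero-mean-in-$\tau$ antiderivative of the purely oscillatory part of the right-hand side. This is well-defined and bounded since the fast-oscillating parts have zero fast mean. Computing the averages: (i) $\overline{a}(t) = 0$ because $\int_0^{2\pi} e^{-J\tau}\,d\tau = 0$; (ii) $\overline{b}(t) = \Omega(t)$ is precisely the skew-symmetric averaged matrix of Lemma \ref{lemma_estimateResonantLinearPart}, using $F_1 = JS$; (iii) $\overline{q}(t,w) = 0$ by a harmonic-balance check — the quadratic term contributes a product of three factors $e^{\pm J\tau}$, whose combined harmonics are all odd and therefore have no constant piece. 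The transformed equation therefore reduces to
\[
	\dot w \;=\; \Omega(t)\, w \;+\; \omega^{-1} g(t,\omega t, w) \;+\; \mathcal{O}(\omega^{-2}),
\]
with $g$ a slowly/fastly mixed remainder that is $\mathcal{O}(1)$ pointwise, and with initial condition $w(0) = -\omega^{-1}\Psi(0,0,\textbf{0}) = \mathcal{O}(\omega^{-1})$ obtained from $y(0) = \textbf{0}$.

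The last step is a variation-of-parameters estimate. Let $\Phi(t)$ be the fundamental solution of $\dot\Phi = \Omega(t)\Phi$; by Lemma \ref{lemma_estimateResonantLinearPart} both $\Phi$ and $\Phi^{-1}$ are uniformly bounded. Then
\[
	w(t) \;=\; \Phi(t)\, w(0) \;+\; \Phi(t)\int_0^t \Phi(s)^{-1}\bigl[\omega^{-1} g(s,\omega s, w) + \mathcal{O}(\omega^{-2}) + \mathcal{O}(w^2)\bigr] ds.
\]
Under the bootstrap hypothesis $|w| \le K/\omega$, the $w^2$ nonlinearity contributes $\mathcal{O}(t\omega^{-2})$, and integration by parts against $e^{\pm i\omega s}$ in the oscillating part of $g$ gives a further factor of $\omega^{-1}$, leaving only an $\mathcal{O}(t\omega^{-2})$ drift from the slow mean of $g$. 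Consequently $|w(t)| \le C(\omega^{-1} + t\omega^{-2})$, which closes the bootstrap and remains $\mathcal{O}(1/\omega)$ for $t = \mathcal{O}(\sqrt\omega)$ (indeed for times as long as $\mathcal{O}(\omega)$). Reverting through the two changes of variable, $x(t) = e^{\omega J t}(w + \omega^{-1}\Psi) = \mathcal{O}(1/\omega)$.

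The main obstacle I anticipate is the verification that the mean $\overline{q} = 0$ and, more importantly, the quantitative control of the secular part of $g$: one must check that no near-resonance between the fast phase and the slowly evolving $\Omega(t)$ produces an $\mathcal{O}(1)$ slow forcing that would already break the bound at $t = \mathcal{O}(1)$. The hypothesis $F_1 = JS$ with $S$ symmetric is precisely what makes $\Omega$ skew (hence $\Phi$ bounded) and is essential for this — without it, the averaged linear flow could amplify the $\mathcal{O}(1/\omega)$ residual exponentially and destroy the estimate well before $t = \sqrt{\omega}$.
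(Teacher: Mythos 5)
Your strategy is essentially the paper's: pass to the rotating frame $y = e^{-\omega J t}x$, remove the oscillatory part of the right-hand side by a near-identity averaging transformation, identify the averaged linear part with the skew-symmetric $\Omega$ of Lemma \ref{lemma_estimateResonantLinearPart} so that its fundamental matrix and inverse are bounded, and close with Duhamel/Gronwall. There is one genuine difference: the paper does \emph{not} average the quadratic term. It carries $S = e^{-\omega J t}\sum_i z^T e^{-J\tau}F_{2i}e^{J\tau}z\,\mathbf{e}_i$ along unchanged and, in the final integral inequality, bounds one of the two factors of $z$ by the a priori $\mathcal{O}(\epsilon^{1/2})$, turning the quadratic term into an effective linear term with small coefficient $\epsilon^{1/2}C_1$; the resulting Gronwall factor $\exp(\epsilon^{1/2}C_1 t)$ is precisely what limits the conclusion to $t=\mathcal{O}(\sqrt{\omega})$. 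Your observation that the quadratic term has zero fast mean (a product of three factors of $e^{\pm J\tau}$ carries only the odd harmonics $\pm1,\pm3$) is correct and is not exploited in the paper; carried out carefully it would remove that Gronwall factor and, as you note, extend the time of validity --- though in the intended application the hypothesis $x=\mathcal{O}(\omega^{-1/2})$ is itself only supplied up to $t=\mathcal{O}(\sqrt{\omega})$, so nothing is gained there.

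The one place where you must still do real work is exactly the point you flag: the slow mean of the $\mathcal{O}(\omega^{-1})$ residual $g$. The paper resolves this by computing the second-order normal form explicitly: with $u_1 = e^{-J\tau}JF_0(s)+g(\tau,s)z$, the solvability condition yields $f_2 = -\langle\partial g/\partial s\rangle z - \langle g\rangle\Omega z$, i.e. the second-order drift has \emph{no $z$-independent part} (every candidate constant term carries a factor $\overline{e^{-J\tau}}=0$), and its $z$-linear part only produces $\exp(C\epsilon t)$ growth through Gronwall, harmless for $t\ll\omega$. Your ``integration by parts against $e^{\pm i\omega s}$'' needs precisely this verification to be legitimate: an $\mathcal{O}(\omega^{-1})$, $w$-independent, nonoscillatory drift would give $|w|=\mathcal{O}(t/\omega)$, which at $t=\sqrt{\omega}$ is only $\mathcal{O}(\omega^{-1/2})$ and would not prove the theorem. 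So your route is sound and slightly sharper on the quadratic term, but the second-order solvability computation is not optional --- it is the step that converts $\mathcal{O}(t\omega^{-1})$ into $\mathcal{O}(\omega^{-1}+t\omega^{-2})$.
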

\begin{proof}
	Let $y(t)=\exp(-\omega J t) x(t)$, then
	\[
		\dot{y} = e^{-\omega J t} F_0(t) + e^{-\omega J t} F_1(t) e^{\omega J t} y + e^{-\omega J t} \sum_{i=1}^{2d} y^T e^{-\omega J t} F_{2i}(t) e^{\omega J t} y \textbf{e}_i + \mathcal{O}(\omega^{-3/2}).
	\]
	Let $\epsilon=1/\omega$, introduce a dummy slow variable $s=t$ and a fast variable $\tau=\omega t$ (which corresponds to the angle associated with the rotation in $\alpha, \beta$), and use prime to denote $d/d\tau$, then the governing equation rewrites as a slow/fast system
	\begin{align}
		y' &= \epsilon e^{-J \tau} F_0(s) + \epsilon e^{-J \tau} F_1(s) e^{J \tau} y + \epsilon e^{-\omega J t} \sum_{i=1}^{2d} y^T e^{-J \tau} F_{2i}(s) e^{J \tau} y \textbf{e}_i + \mathcal{O}(\epsilon^{5/2}) \label{eq_yNormalForm} \\
		s' &= \epsilon \nonumber \\
		\tau' &= 1 \nonumber 
	\end{align}
	We look for a near-identity transformation in the form of
	\[
		y=z+\epsilon u_1(z,s,\tau)+\epsilon^2 u_2(z,s,\tau)+\mathcal{O}(\epsilon^{5/2}),
	\]
	where $u_1,u_2$ are $2\pi$-periodic in $\tau$, such that
	\begin{align*}
		z' &= \epsilon f_1(z,s)+\epsilon^2 f_2(z,s) + \epsilon S + \mathcal{O}(\epsilon^{5/2}) \\
		S &:= e^{-\omega J t} \sum_{i=1}^{2d} z^T e^{-J \tau} F_{2i}(s) e^{J \tau} z \textbf{e}_i
	\end{align*}
	for some $f_1, f_2$ independent of the fast variable $\tau$.
	
	Under this transformation, the right hand side of \eqref{eq_yNormalForm} becomes
	\begin{align*}
		&~ \epsilon e^{-J \tau} F_0(s) + \epsilon e^{-J \tau} F_1(s) e^{J \tau} z + \epsilon^2 e^{-J \tau} F_1(s) e^{J \tau} u_1 + \epsilon e^{-\omega J t} \sum_{i=1}^{2d} z^T e^{-J \tau} F_{2i}(s) e^{J \tau} z \textbf{e}_i \\
		& \qquad\qquad + \epsilon^2 e^{-\omega J t} \sum_{i=1}^{2d} (z^T e^{-J \tau} F_{2i}(s) e^{J \tau} u_1 + u_1^T e^{-J \tau} F_{2i}(s) e^{J \tau} z) \textbf{e}_i + \mathcal{O}(\epsilon^{5/2}) \\
		&= \epsilon e^{-J \tau} F_0(s) + \epsilon e^{-J \tau} F_1(s) e^{J \tau} z + \epsilon^2 e^{-J \tau} F_1(s) e^{J \tau} u_1 + \epsilon S + \mathcal{O}(\epsilon^{5/2}),
	\end{align*}
	where the last equality is due to $z=y+\mathcal{O}(\epsilon)=\mathcal{O}(1)x + \mathcal{O}(\epsilon)=\mathcal{O}(\epsilon^{1/2})$.
	
	The left hand side of \eqref{eq_yNormalForm}, on the other hand, becomes
	\begin{align*}
		&~ z' + \epsilon \frac{\partial u_1}{\partial z} z' + \epsilon \frac{\partial u_1}{\partial s} s' + \epsilon \frac{\partial u_1}{\partial \tau} \tau' + \epsilon^2 \frac{\partial u_2}{\partial z} z' + \epsilon^2 \frac{\partial u_2}{\partial s} s' + \epsilon^2 \frac{\partial u_2}{\partial \tau} \tau' + \mathcal{O}(\epsilon^{5/2}) \\
		&= \epsilon f_1 + \epsilon^2 f_2 + \epsilon S + \epsilon \frac{\partial u_1}{\partial z} \epsilon f_1 + \epsilon \frac{\partial u_1}{\partial z} \epsilon S + \epsilon^2 \frac{\partial u_1}{\partial s} + \epsilon \frac{\partial u_1}{\partial \tau} + \epsilon^2 \frac{\partial u_2}{\partial \tau} + \mathcal{O}(\epsilon^{5/2}) \\
		&= \epsilon f_1 + \epsilon^2 f_2 + \epsilon S + \epsilon^2 \frac{\partial u_1}{\partial z} f_1 + \epsilon^2 \frac{\partial u_1}{\partial s} + \epsilon \frac{\partial u_1}{\partial \tau} + \epsilon^2 \frac{\partial u_2}{\partial \tau} + \mathcal{O}(\epsilon^{5/2}),
	\end{align*}
	where the last equality is due to that $S=\mathcal{O}(z^2)=\mathcal{O}(\epsilon^{-1})$.
	
	Matching $\mathcal{O}(\epsilon)$ and $\mathcal{O}(\epsilon^2)$ terms respectively, we obtain the following requirements on $u_1,u_2,f_1,f_2$:
	\begin{align}
		& f_1+\frac{\partial u_1}{\partial \tau} = e^{-J\tau} F_0(s) + e^{-J\tau} F_1(s) e^{J\tau} z , \label{eq_normalFormCondition1} \\
		& f_2+\frac{\partial u_1}{\partial z} f_1 + \frac{\partial u_1}{\partial s} + \frac{\partial u_2}{\partial \tau} = e^{-J\tau} F_1(s) e^{J\tau} u_1 . \label{eq_normalFormCondition2}
	\end{align}
	In order for a solution $u_1$ periodic in $\tau$ to exist, $f_1$ has to satisfy a solvability condition
	\[
		2\pi f_1(z,s) = \int_0^{2\pi} \left( e^{-J\tau} F_0(s) + e^{-J\tau} F_1(s) e^{J\tau} z \right) \, d\tau,
	\]
	which is obtained after integrating both sides of \eqref{eq_normalFormCondition1} over $\tau$. The first term of the integrand vanishes after integration, and lemma \ref{lemma_estimateResonantLinearPart} then leads to
	\[
		f_1(z,s)=\Omega(s) z
	\]
	for some real skew-symmetric $\Omega$. Integration then gives
	\[
		u_1=e^{-J\tau} J F_0(s) + g(\tau,s) z
	\]
	for some $g(\tau,s)$ periodic in $\tau$.
	
	The next order equation \eqref{eq_normalFormCondition2} leads to a solvability condition
	\[
		2\pi f_2(z,s) = \int_0^{2\pi} \left( e^{-J\tau} F_1(s) e^{J\tau} u_1 - \frac{\partial u_1}{\partial s} - \frac{\partial u_1}{\partial z} f_1 \right) \, d\tau,
	\]
	and thus $f_2=-\langle \frac{\partial g}{\partial s} \rangle z - \langle g \rangle \Omega z$, where $\langle \cdot \rangle$ indicates time average with respect to $\tau$.
	
	Consequently, $z$ satisfies
	\[
		z'=\epsilon \Omega (s) z+\epsilon e^{-\omega J t} \sum_{i=1}^{2d} z^T e^{-J \tau} F_{2i}(s) e^{J \tau} z \textbf{e}_i - \epsilon^2 \langle \frac{\partial g}{\partial s} \rangle z - \epsilon^2 \langle g \rangle \Omega z + \mathcal{O}(\epsilon^{5/2}).
	\]
	Rescale back to the original time so that the right hand side gets divided by $\epsilon$. Let $Z(t)=\Phi(t)^{-1} z(t)$, where $\Phi$ is the fundamental matrix associated with $\Omega$. Then 
	\[
		\dot{Z}= \Phi(t)^{-1} e^{-\omega J t} \sum_{i=1}^{2d} (\Phi(t) Z(t))^T e^{-J \tau} F_{2i}(s) e^{J \tau} (\Phi(t) Z(t)) \textbf{e}_i - \epsilon \Phi(t)^{-1} (\langle \frac{\partial g}{\partial s} \rangle + \langle g \rangle \Omega ) \Phi(t) Z(t) + \mathcal{O}(\epsilon^{3/2})
	\]
	Note $\Phi$ and $\Phi^{-1}$ are bounded according to lemma \ref{lemma_estimateResonantLinearPart}.
	Rewrite this in integral form, and bounding one of the two $Z$'s in the first (quadratic) term by $Z=\mathcal{O}(z)=\mathcal{O}(\epsilon^{1/2})$, we have
	\[
		\|Z(t)\| \leq \int_0^t \left( \epsilon^{1/2} C_1 \|Z(\hat{t})\| + \epsilon C_2 \|Z(\hat{t})\| + \epsilon^{3/2} C_3 \right) \, d\hat{t}
	\]
	for some $C_1,C_2,C_3>0$ independent of $\epsilon$.
	Gronwall's lemma leads to
	\[
		\|Z(t)\| \leq \epsilon^{3/2} t C_3 \exp((\epsilon^{1/2} C_1+\epsilon C_2)t),
	\]
	which means $Z(t)=\mathcal{O}(\epsilon)$ till at least $t=\mathcal{O}(\epsilon^{-1/2})$. Since $Z$, $z$, $y$, $x$ are at the same order due to boundedness of $\Phi$ and $\exp(\omega J t)$, $x(t)=\mathcal{O}(\epsilon)=\mathcal{O}(1/\omega)$ till at least $t=\mathcal{O}(\epsilon^{-1/2})=\mathcal{O}(\sqrt{\omega})$.
\end{proof}

\begin{Theorem}
	Consider the dynamics generated by $\hat{H}$ (eqn. \ref{eq_Hhat}), where $\mathcal{R}$ is given by a sum of nested Poisson brackets (see step \ref{item_step2} in section \ref{sec_integrableCase}). If $H(\cdot,\cdot)$ is analytic, $\omega$ is large enough, $\alpha(0)=\beta(0)=\textbf{0}$, $\alpha(t),\beta(t)=\mathcal{O}(\omega^{-1/2})$ till at least $t=\mathcal{O}(\sqrt{\omega})$, and $\bar{Q}(t),\bar{P}(t)$ remain bounded independent of $\omega$ till at least the same time, then $\alpha(t),\beta(t)=\mathcal{O}(\omega^{-1})$ till at least the same time.
\end{Theorem}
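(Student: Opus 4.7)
The strategy is to reduce the statement directly to the abstract normal-form Theorem \ref{thm_normalForm}, applied with $x:=[\alpha;\beta]^T\in\mathbb{R}^{2d}$. First I would write out Hamilton's equations generated by $\hat{H}$ for $(\alpha,\beta)$, viewing the slow variables $\bar{Q}(t),\bar{P}(t)$ as bounded, smooth, externally-driven coefficients (their $\omega$-independent boundedness is part of the hypothesis). The harmonic restraint $\tfrac{1}{2}\omega\|\alpha\|^2+\tfrac{1}{2}\omega\|\beta\|^2$ generates (up to an innocuous constant from the normalization of the canonical 2-form on $(\bar{Q},\bar{P},\alpha,\beta)$, which can be absorbed into time) the principal linear piece $\omega Jx$ in $\dot{x}$, matching the leading term of Theorem \ref{thm_normalForm}.

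Second, I would Taylor-expand the pair $H(\bar{Q}+\alpha/2,\bar{P}-\beta/2)+H(\bar{Q}-\alpha/2,\bar{P}+\beta/2)$ around $(\bar{Q},\bar{P})$. By the central symmetry $(\alpha,\beta)\mapsto(-\alpha,-\beta)$, every odd-order term cancels, leaving
\[
2H(\bar{Q},\bar{P})+\tfrac{1}{2}\,x^T M(t)\,x+\mathcal{O}(\|x\|^4), \qquad M(t)=\tfrac{1}{4}\begin{pmatrix} H_{QQ} & -H_{QP} \\ -H_{PQ} & H_{PP}\end{pmatrix}\bigg|_{(\bar{Q}(t),\bar{P}(t))},
\]
with $M$ manifestly symmetric (since $H_{QQ},H_{PP}$ are Hessians and $H_{PQ}=H_{QP}^T$). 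Differentiating yields a linear-in-$x$ contribution to $\dot{x}$ of the form $JM(t)x$, exactly the structure $F_1(t)=JS(t)$ with symmetric $S$ required by Lemma \ref{lemma_estimateResonantLinearPart}; moreover the same symmetry kills any quadratic-in-$x$ contribution, so one has $F_{2i}\equiv 0$, and the cubic-and-higher tail fits into the $\mathcal{O}(\|x\|^3)$ remainder. Analyticity of $H$ together with boundedness of $\bar{Q}(t),\bar{P}(t)$ guarantees that $M(t)$ and all its $t$-derivatives are uniformly bounded in $\omega$, so the regularity hypothesis of Theorem \ref{thm_normalForm} holds.

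Third, I would account for the contribution of $\mathcal{R}$ to $\dot{x}$. Step 3 of section \ref{sec_integrableCase} shows that each nested Poisson-bracket term of $\mathcal{R}$ carries a factor $\omega^{-1}$ for every occurrence of $H_A/\omega$ or $H_B/\omega$ and a factor $\omega^{-1}$ for every occurrence of $H_C$ (since the latter's derivatives are $\pm\alpha,\pm\beta$ of size $\omega^{-1/2}$, and they appear in pairs inside a Poisson bracket). The $x$-gradient can shed at most one factor of $\omega^{-1/2}$, so $\nabla_x\mathcal{R}$ is small enough to be absorbed into $F_0(t)$ and a symmetric correction to $S(t)$ once $\omega$ is sufficiently large. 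The hypotheses of Theorem \ref{thm_normalForm} are then all in place: $x(0)=\mathbf{0}$, $x(t)=\mathcal{O}(\omega^{-1/2})$ by assumption, the leading $\omega Jx$ is explicit, $F_1=JS$ with $S$ symmetric, and the coefficient functions have bounded derivatives. Invoking the theorem yields $x(t)=\mathcal{O}(\omega^{-1})$ up to at least $t=\mathcal{O}(\sqrt{\omega})$, which is the claim.

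The main obstacle I anticipate is the bookkeeping around $\mathcal{R}$: one must verify that, after differentiation in $x$, no piece survives that is simultaneously (i) large enough to compete with $\omega Jx$ or the $\mathcal{O}(\|x\|^3)$ remainder, and (ii) structurally incompatible with the $F_1=JS$ hypothesis. The first concern is settled by the refined scaling for each $H_C$ factor; the second is, in principle, automatic from the fact that every term in $\mathcal{R}$ comes from a Poisson bracket of smooth Hamiltonians and therefore generates a locally Hamiltonian vector field, but making this structural argument fully rigorous at arbitrary order in the Baker--Campbell--Hausdorff expansion is the delicate step.
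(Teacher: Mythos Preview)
Your reduction to Theorem~\ref{thm_normalForm} is the same as the paper's, but you take a more circuitous route to verify its hypotheses. You split the non-harmonic part of $\hat{H}$ into $H_A+H_B$ (handled by parity) and $\mathcal{R}$ (handled by size estimates from step~3 of section~\ref{sec_integrableCase}), and then worry whether $\mathcal{R}$ might contribute a linear-in-$x$ piece to $\dot{x}$ that fails to have the $JS$ structure. The paper bypasses this entirely: since $\mathcal{R}$ is a sum of nested Poisson brackets of analytic functions, $\mathcal{R}$ itself is analytic, hence so is all of $\hat{H}$; one then Taylor-expands the whole $\hat{H}$ in $x=[\alpha;\beta]$ at once, and the linear-in-$x$ part of $\dot{x}=J\,\partial_x\hat{H}$ is automatically $J H_2(\bar{Q},\bar{P})$ with $H_2$ the (symmetric) Hessian of $\hat{H}$ in $x$. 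No size estimate on $\mathcal{R}$ is needed for this theorem, and the ``delicate step'' you flag is a non-issue: the $F_1=JS$ structure comes for free from the fact that the entire vector field is Hamiltonian. Your parity observation (that $H_A+H_B$ contributes nothing to $F_0$ or $F_{2i}$) is correct and pleasant, but the paper simply allows nonzero $F_0,F_{2i}$ in Theorem~\ref{thm_normalForm} and does not need to track where they come from or how large they are.
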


\begin{proof}
Let $x=[\alpha,\beta]$, then
\[
	\dot{x}=J \frac{\partial H}{\partial x}.
\]
Since $H$ and $\alpha^2+\beta^2$ are analytic, $\mathcal{R}$ is also analytic because it is a sum of their nested canonical Poisson brackets. Therefore, $\hat{H}$ can be locally written as
\[
	\hat{H}=H_0(\bar{Q},\bar{P})+H_1(\bar{Q},\bar{P}) x+\frac{1}{2} x' H_2(\bar{Q},\bar{P}) x+\sum_{n=3}^{\infty} H_n(\bar{Q},\bar{P})[x^{\otimes}],
\]
where $H_1(\bar{Q},\bar{P})$ is a vector, $H_2(\bar{Q},\bar{P})$ is a symmetric matrix, and $H_n(\bar{Q},\bar{P})[x^{\otimes n}]$ stands for a homogeneous polynomial of degree $n$ in $x$, with coefficients being analytic functions of $\bar{Q},\bar{P}$.

Therefore, the $\alpha,\beta$ dynamics can be locally written as
\[
	\dot{x}=\omega J x+F_0(\bar{Q},\bar{P})+F_1(\bar{Q},\bar{P})x+\sum_{i=1}^{2d} x^T F_{2i}(\bar{Q},\bar{P}) x \textbf{e}_i + \sum_{n=3}^\infty F_n(\bar{Q},\bar{P})(x^{\otimes n}),
\]
for some $F_1$, $F_{2i}$, and $F_n$. In particular, $F_1(\bar{Q},\bar{P})=J H_2(\bar{Q},\bar{P})$ for symmetric $H_2$.

Since $\bar{Q},\bar{P}$ are functions of $t$ and bounded (independent of $\omega$) till at least $t=\mathcal{O}(\omega^{1/2})$, $\dot{\bar{Q}},\dot{\bar{P}}$, which are given by Hamilton's equations as functions of $\bar{Q},\bar{P},\alpha,\beta$, are also bounded. Consequently, $F_n(\bar{Q},\bar{P})$ are implicitly functions of time (i.e. $F_n(t)$) with bounded 1st time derivatives.

Therefore, Theorem \ref{thm_normalForm} applies, and thus $x=\mathcal{O}(\omega^{-1})$ till at least $t=\mathcal{O}(\omega^{1/2})$.
\end{proof}

Note we worked at the level of equations but not Hamiltonians; i.e. we did not express \eqref{eq_Hhat} as a nearly integrable system by transforming $\alpha,\beta$ into action and angle. This is because we are interested in the special initial condition $\alpha(0)=\beta(0)=0$, which corresponds a singularity of the transformation, and it will break the analyticity of $\hat{H}$ and render classical approaches like KAM or Nekhoroshev's method difficult to apply.

\section{Acknowledgment}
This work was partially supported by NSF grant DMS-1521667. The author sincerely thanks Chengchun Zeng, Jonathan Goodman, and Rafael de la Llave for inspiring discussions, Zaher Hani for detailed explanations of nonlinear Schr\"{o}dinger equation, and Gongjie Li, Huan Yang and Pauli Pihajoki for rich knowledge in Schwarzschild geodesics.

\bibliographystyle{unsrt}
{\footnotesize
\bibliography{molei24}

\def\cprime{$'$} \def\cprime{$'$} \def\cprime{$'$}
  \def\cydot{\leavevmode\raise.4ex\hbox{.}}
\begin{thebibliography}{10}

\bibitem{sanz1992symplectic}
Jes{\'u}s~Mar{\'\i}a Sanz-Serna.
\newblock Symplectic integrators for {H}amiltonian problems: an overview.
\newblock {\em Acta Numerica}, 1:243--286, 1992.

\bibitem{MaWe:01}
Jerrold~E Marsden and Matthew West.
\newblock Discrete mechanics and variational integrators.
\newblock {\em Acta Numerica}, 10:357--514, 2001.

\bibitem{Hairer06}
E.~Hairer, C.~Lubich, and G.~Wanner.
\newblock {\em Geometric Numerical Integration: Structure-Preserving Algorithms
  for Ordinary Differential Equations}.
\newblock Springer, Berlin Heidelberg New York, second edition, 2006.

\bibitem{calvo1995accurate}
Mari-Paz Calvo and Ernst Hairer.
\newblock Accurate long-term integration of dynamical systems.
\newblock {\em Applied Numerical Mathematics}, 18(1):95--105, 1995.

\bibitem{quispel1998volume}
GRW Quispel and CP~Dyt.
\newblock Volume-preserving integrators have linear error growth.
\newblock {\em Physics Letters A}, 242(1):25--30, 1998.

\bibitem{ruth1983canonical}
Ronald~D Ruth.
\newblock A canonical integration technique.
\newblock {\em IEEE Trans. Nucl. Sci}, 30(4):2669--2671, 1983.

\bibitem{feng1986difference}
Kang Feng.
\newblock Difference schemes for {H}amiltonian formalism and symplectic
  geometry.
\newblock {\em Journal of Computational Mathematics}, 4(3):279--289, 1986.

\bibitem{yoshida1990construction}
Haruo Yoshida.
\newblock Construction of higher order symplectic integrators.
\newblock {\em Physics Letters A}, 150(5):262--268, 1990.

\bibitem{wisdom1991symplectic}
Jack Wisdom and Matthew Holman.
\newblock Symplectic maps for the n-body problem.
\newblock {\em The Astronomical Journal}, 102:1528--1538, 1991.

\bibitem{mclachlan2002splitting}
Robert~I McLachlan and G~Reinout~W Quispel.
\newblock Splitting methods.
\newblock {\em Acta Numerica}, 11:341--434, 2002.

\bibitem{leimkuhler2004simulating}
Benedict Leimkuhler and Sebastian Reich.
\newblock {\em Simulating {H}amiltonian dynamics}, volume~14.
\newblock Cambridge University Press, 2004.

\bibitem{colliander2010transfer}
James Colliander, Markus Keel, Gigiola Staffilani, Hideo Takaoka, and Terence
  Tao.
\newblock Transfer of energy to high frequencies in the cubic defocusing
  nonlinear {S}chr{\"o}dinger equation.
\newblock {\em Inventiones mathematicae}, 181(1):39--113, 2010.

\bibitem{henrard1983second}
J~Henrard and A~Lemaitre.
\newblock A second fundamental model for resonance.
\newblock {\em Celestial Mechanics}, 30(2):197--218, 1983.

\bibitem{murray1997diffusive}
N~Murray and M~Holman.
\newblock Diffusive chaos in the outer asteroid belt.
\newblock {\em The Astronomical Journal}, 114:1246--1259, 1997.

\bibitem{li2014eccentricity}
Gongjie Li, Smadar Naoz, Bence Kocsis, and Abraham Loeb.
\newblock Eccentricity growth and orbit flip in near-coplanar hierarchical
  three-body systems.
\newblock {\em The Astrophysical Journal}, 785(2):116, 2014.

\bibitem{li2014chaos}
Gongjie Li, Smadar Naoz, Matt Holman, and Abraham Loeb.
\newblock Chaos in the test particle eccentric {K}ozai-{L}idov mechanism.
\newblock {\em The Astrophysical Journal}, 791:10, 2014.

\bibitem{forest2006geometric}
Etienne Forest.
\newblock Geometric integration for particle accelerators.
\newblock {\em Journal of Physics A: Mathematical and General}, 39(19):5321,
  2006.

\bibitem{qin2008variational}
Hong Qin and Xiaoyin Guan.
\newblock Variational symplectic integrator for long-time simulations of the
  guiding-center motion of charged particles in general magnetic fields.
\newblock {\em Physical Review Letters}, 100(3):035006, 2008.

\bibitem{Ta16}
Molei Tao.
\newblock Explicit high-order symplectic integrators for charged particles in
  general electromagnetic fields.
\newblock Submitted for publication, 2015.

\bibitem{koon2000heteroclinic}
Wang~Sang Koon, Martin~W Lo, Jerrold~E Marsden, and Shane~D Ross.
\newblock Heteroclinic connections between periodic orbits and resonance
  transitions in celestial mechanics.
\newblock {\em Chaos: An Interdisciplinary Journal of Nonlinear Science},
  10(2):427--469, 2000.

\bibitem{bond1999nose}
Stephen~D Bond, Benedict~J Leimkuhler, and Brian~B Laird.
\newblock The nos{\'e}--poincar{\'e} method for constant temperature molecular
  dynamics.
\newblock {\em Journal of Computational Physics}, 151(1):114--134, 1999.

\bibitem{sturgeon2000symplectic}
Jess~B Sturgeon and Brian~B Laird.
\newblock Symplectic algorithm for constant-pressure molecular dynamics using a
  nos{\'e}--poincar{\'e} thermostat.
\newblock {\em The Journal of Chemical Physics}, 112(8):3474--3482, 2000.

\bibitem{worthington2012study}
Joachim Worthington.
\newblock {\em A Study of the Planar Circular Restricted Three Body Problem and
  the Vanishing Twist}.
\newblock PhD thesis, Thesis, Univ. Sydney, 2012.

\bibitem{landau2013classical}
Lev~Davidovich Landau.
\newblock {\em The classical theory of fields}, volume~2.
\newblock Elsevier, 2013.

\bibitem{buonanno2006transition}
Alessandra Buonanno, Yanbei Chen, and Thibault Damour.
\newblock Transition from inspiral to plunge in precessing binaries of spinning
  black holes.
\newblock {\em Physical Review D}, 74(10):104005, 2006.

\bibitem{kol1997symplectic}
Ayla Kol, Brian~B Laird, and Benedict~J Leimkuhler.
\newblock A symplectic method for rigid-body molecular simulation.
\newblock {\em The Journal of chemical physics}, 107(7):2580--2588, 1997.

\bibitem{tao2016variational}
Molei Tao and Houman Owhadi.
\newblock Variational and linearly-implicit integrators, with applications.
\newblock {\em IMA J. Num. Anal.}, 36:80--107, 2016.

\bibitem{huang2006symplectic}
Zhi-Xiang Huang and Xian-Liang Wu.
\newblock Symplectic partitioned {R}unge--{K}utta scheme for {M}axwell's
  equations.
\newblock {\em International Journal of Quantum Chemistry}, 106(4):839--842,
  2006.

\bibitem{calvo1994numerical}
J.M. Sanz-Serna and M.P. Calvo.
\newblock {\em Numerical {H}amiltonian problems}.
\newblock Chapman and Hall/CRC, 1st edition, 1994.

\bibitem{blanes2002symplectic}
Sergio Blanes.
\newblock Symplectic maps for approximating polynomial hamiltonian systems.
\newblock {\em Physical Review E}, 65(5):056703, 2002.

\bibitem{mclachlan2004explicit}
Robert~I McLachlan and G~Reinout~W Quispel.
\newblock Explicit geometric integration of polynomial vector fields.
\newblock {\em BIT Numerical Mathematics}, 44(3):515--538, 2004.

\bibitem{chin2009explicit}
Siu~A Chin.
\newblock Explicit symplectic integrators for solving nonseparable
  hamiltonians.
\newblock {\em Physical Review E}, 80(3):037701, 2009.

\bibitem{wu2003explicit}
YK~Wu, E~Forest, and DS~Robin.
\newblock Explicit symplectic integrator for s-dependent static magnetic field.
\newblock {\em Physical Review E}, 68(4):046502, 2003.

\bibitem{pihajoki2015explicit}
Pauli Pihajoki.
\newblock Explicit methods in extended phase space for inseparable
  {H}amiltonian problems.
\newblock {\em Celestial Mechanics and Dynamical Astronomy}, 121(3):211--231,
  2015.

\bibitem{forest1989canonical}
Etienne Forest.
\newblock Canonical integrators as tracking codes.
\newblock In {\em AIP Conf. Proc.}, volume 184, pages 1106--1136. AIP
  Publishing, 1989.

\bibitem{suzuki1990fractal}
Masuo Suzuki.
\newblock Fractal decomposition of exponential operators with applications to
  many-body theories and {M}onte {C}arlo simulations.
\newblock {\em Physics Letters A}, 146(6):319--323, 1990.

\bibitem{MR1227985}
E.~Hairer, S.~P. N{\o}rsett, and G.~Wanner.
\newblock {\em Solving ordinary differential equations. {I}}, volume~8 of {\em
  Springer Series in Computational Mathematics}.
\newblock Springer-Verlag, Berlin, second edition, 1993.
\newblock Nonstiff problems.

\bibitem{kolmogorov1954conservation}
AN~Kolmogorov.
\newblock On conservation of conditionally periodic motions under small
  perturbations of the {H}amiltonian.
\newblock In {\em Dokl. akad. nauk SSSR}, volume~98, pages 527--530, 1954.

\bibitem{arnol1963proof}
Vladimir~I Arnol'd.
\newblock Proof of a theorem of {A}.{N}. {K}olmogorov on the invariance of
  quasi-periodic motions under small perturbations of the {H}amiltonian.
\newblock {\em Russian Mathematical Surveys}, 18(5):9--36, 1963.

\bibitem{moser1962invariant}
J~Moser.
\newblock On invariant curves of area-preserving mappings of an annulus.
\newblock {\em Nachr. Akad. Wiss. G{\"o}ttingen, II}, pages 1--20, 1962.

\bibitem{poschel1982integrability}
J{\"u}rgen P{\"o}schel.
\newblock Integrability of {H}amiltonian systems on {C}antor sets.
\newblock {\em Communications on Pure and Applied Mathematics}, 35:653--696,
  1982.

\bibitem{eliasson1996absolutely}
LH~Eliasson.
\newblock Absolutely convergent series expansions for quasi periodic motions.
\newblock {\em Math. Phys. Electron. J.}, 2(4):Paper--4, 1996.

\bibitem{giorgilli1997kolmogorov}
Antonio Giorgilli and Ugo Locatelli.
\newblock Kolmogorov theorem and classical perturbation theory.
\newblock {\em Z. Angew. Math. Phys.}, 48(2):220--261, 1997.

\bibitem{MaRa10}
Jerrold~E Marsden and Tudor~S Ratiu.
\newblock {\em Introduction to Mechanics and Symmetry}.
\newblock Springer, 2nd edition, 2010.

\bibitem{abramowitz1964handbook}
Milton Abramowitz and Irene~A Stegun.
\newblock {\em Handbook of mathematical functions, 1965}.
\newblock Dover Publications, New York, 1964.

\bibitem{BoOw:09}
Nawaf Bou-Rabee and Houman Owhadi.
\newblock Long-run accuracy of variational integrators in the stochastic
  context.
\newblock {\em SIAM J. Numer. Anal.}, 48(1):278--297, 2010.

\bibitem{ObTa13}
Sina Ober-Bl{\"o}baum, Molei Tao, Mulin Cheng, Houman Owhadi, and Jerrold~E
  Marsden.
\newblock Variational integrators for electric circuits.
\newblock {\em Journal of Computational Physics}, 242:498--530, 2013.

\bibitem{dyachenko1992optical}
S~Dyachenko, AC~Newell, A~Pushkarev, and VE~Zakharov.
\newblock Optical turbulence: weak turbulence, condensates and collapsing
  filaments in the nonlinear {S}chr{\"o}dinger equation.
\newblock {\em Physica D: Nonlinear Phenomena}, 57(1):96--160, 1992.

\bibitem{cai1999spectral}
David Cai, Andrew~J Majda, David~W McLaughlin, and Esteban~G Tabak.
\newblock Spectral bifurcations in dispersive wave turbulence.
\newblock {\em Proceedings of the National Academy of Sciences},
  96(25):14216--14221, 1999.

\bibitem{cai2001dispersive}
David Cai, Andrew~J Majda, David~W McLaughlin, and Esteban~G Tabak.
\newblock Dispersive wave turbulence in one dimension.
\newblock {\em Physica D: Nonlinear Phenomena}, 152:551--572, 2001.

\bibitem{majda1997one}
AJ~Majda, DW~McLaughlin, and EG~Tabak.
\newblock A one-dimensional model for dispersive wave turbulence.
\newblock {\em Journal of Nonlinear Science}, 7(1):9--44, 1997.

\bibitem{milewski2002resonant}
Paul~A Milewski, Esteban~G Tabak, and Eric Vanden-Eijnden.
\newblock Resonant wave interaction with random forcing and dissipation.
\newblock {\em Studies in Applied Mathematics}, 108(1):123--144, 2002.

\bibitem{deville2007nonequilibrium}
RE~DeVille, Paul~A Milewski, Ricardo~J Pignol, Esteban~G Tabak, and Eric
  Vanden-Eijnden.
\newblock Nonequilibrium statistics of a reduced model for energy transfer in
  waves.
\newblock {\em Communications on pure and applied mathematics}, 60(3):439--461,
  2007.

\end{thebibliography}
}

\end{document}